\numberwithin{equation}{section}
\newcommand{\address}[1]{\gdef\@address{#1}}
\newcommand{\printaddress}{\@address}
\newtheoremstyle{mytheoremstyle}
  {10pt}
  {10pt}
  {\itshape}
  {}
  {\bfseries}
  {}
  {1em}
{\thmname{#1}\thmnumber{ #2}\thmnote{ \textup{#3}}}
\theoremstyle{mytheoremstyle}
\newtheorem{thm}{Theorem}[section]
\newtheorem{prop}[thm]{Proposition}
\newtheorem{defi}[thm]{Definition}
\newtheorem{fac}[thm]{Fact}
\newtheorem{exam}[thm]{Example}
\newtheorem{remark}[thm]{Remark}
\newtheorem{lem}[thm]{Lemma}
\newtheorem{cor}[thm]{Corollary}
\renewenvironment{proof}[1][\proofname]{%
  \par\normalfont 
  \topsep6pt\relax 
  \trivlist\item[\hskip\labelsep\itshape #1\@addpunct{.}]\ignorespaces
}{%
  \qed\endtrivlist\@endpefalse
}
\DeclareMathOperator{\PSL}{\mathrm{PSL}}
\DeclareMathOperator{\Aut}{\mathrm{Aut}}
\DeclareMathOperator{\Z}{\mathbb{Z}}
\DeclareMathOperator{\id}{\mathrm{id}}
\DeclareMathOperator{\SD}{\mathrm{SD}}
\renewcommand*\@cite[2]{%
  (#1\if\relax\detokenize{#2}\relax\else , #2\fi)%
}
\renewcommand*\@biblabel[1]{(#1)}
\newcommand{\citenp}{\@ifnextchar[{\@citenp}{\@citenp[]}}
\newcommand{\@citenp}[2][]{%
  \begingroup
  % remove outer parentheses only (keep the optional note)
  \renewcommand*\@cite[2]{%
    ##1\if\relax\detokenize{##2}\relax\else , ##2\fi%
  }%
  \cite[#1]{#2}%
  \endgroup
}
\begin{document}
\doublespacing

\title{Non-isomorphic metacyclic 
$p$-groups of split type with the same group zeta function}

\author{
Yuto Nogata\\
\normalsize{Graduate School of Science and Technology, Hirosaki University}\\
\normalsize{3 Bunkyo-cho, Hirosaki, Aomori, 036-8560, Japan}\\
\normalsize{Email: h24ms113@hirosaki-u.ac.jp}
}
\date{\today}

\maketitle

\section*{Abstract}
For a finite group $G$, let $a_n(G)$ be the number of subgroups of order $n$ and define
$\zeta_G(s)=\sum_{n\ge 1} a_n(G)n^{-s}$.
Examples are known of non-isomorphic finite groups with the same group zeta function.
However, no general criterion is known for when two finite groups have the same group zeta function.
Fix integers $m,n\ge 1$ and a prime $p$, and consider the metacyclic $p$-groups of split type $G(p,m,n,k)$ defined by $
G(p,m,n,k)=\langle a,b \mid a^{p^{m}}=b^{p^{n}}=\id, b^{-1}ab=a^{k}\rangle$.
For fixed $m$ and $n$, we characterize the pairs of parameters $k_1,k_2$ for which
$\zeta_{G(p,m,n,k_1)}(s)=\zeta_{G(p,m,n,k_2)}(s)$.

\section{Introduction}\label{section:meta-introduction-start}
There is a long tradition of studying subgroup growth and related subgroup counting problems.
Classical congruence results for the number of subgroups of index $p$ in finite $p$-groups go back to \cite{Miller1919} and were strengthened in \cite{Dyubyuk1950}.
For free groups, a recursion for the number of subgroups of index $n$ is given in \cite{Hall1949}.
For the modular group $\PSL(2,\Z)$, explicit computations and asymptotics for the number of subgroups of index $n$
are studied in \cite{Imrich1978}, \cite{Stothers1977}.

Motivated by these developments, the Dirichlet series enumerating finite-index subgroups is defined by
\[
  \zeta^{\mathrm{idx}}_{G}(s)
    :=\sum_{\substack{H\le G\\ [G:H]<\infty}} [G:H]^{-s}
    =\sum_{n\ge 1}\frac{a^{\mathrm{idx}}_{n}(G)}{n^{s}},
  \qquad
  a^{\mathrm{idx}}_{n}(G):=\#\{\,H\le G\mid [G:H]=n\,\}.
\]
It was introduced in \cite{Smith1983}, and it was studied systematically for nilpotent groups in \cite{GSS1988}. Analogous Dirichlet series enumerating normal finite-index subgroups are studied in \cite{Lee2022}.

In this paper, we consider the group zeta function that enumerates all subgroups of a finite group by their orders.
Following \cite{Hironaka2017}, we denote by $S(G)$ the set of all subgroups of a finite group $G$.
For each integer $n\ge 1$, we define $a_n(G):=\#\{\,H\in S(G)\mid |H|=n\,\}$.
We define the group zeta function of $G$ by
\[
  \zeta_G(s)=\sum_{H\in S(G)}|H|^{-s}
  =\sum_{n\ge 1}\frac{a_n(G)}{n^{s}}.
\]
For isomorphic finite groups $G\simeq G'$, we trivially have $\zeta_G(s)=\zeta_{G'}(s)$.
In general, no criterion is known for when non-isomorphic groups can have the same group zeta function.
In \cite{Hironaka2017}, the following pair is shown to provide an example of this phenomenon:
\begin{align*}
G_\lambda
  &= \langle a_1,a_2,\dots,a_n \mid |a_i| = p^{\lambda_i},\ a_i a_j = a_j a_i \rangle
     \simeq \prod_{i=1}^{n} \mathbb{Z}_{p^{\lambda_i}},\\
\widetilde{G}_\lambda
  &= \langle a_1,a_2,\dots,a_n \mid |a_i| = p^{\lambda_i},\
       a_n a_1 a_n^{-1} = a_1^{\,1+p^{\lambda_1-1}},\
       a_i a_j = a_j a_i \text{ unless } (i,j)=(1,n) \rangle
  \nonumber\\
  &\simeq \prod_{i=2}^{n-1} \mathbb{Z}_{p^{\lambda_i}}
        \times \bigl(\mathbb{Z}_{p^{\lambda_1}}\rtimes
                      \mathbb{Z}_{p^{\lambda_n}}\bigr).
\end{align*}
Outside $p$-groups, further examples of non-isomorphic finite groups with the same group zeta function
have been found, for instance in \cite{ParkerKanchana2014}.
More generally, no necessary and sufficient condition for this phenomenon is currently known.

A finite group $G$ is called \emph{metacyclic} if $G$ has a cyclic normal subgroup
$N\lhd G$ and the quotient $G/N$ is also cyclic.
Standard presentations of finite metacyclic groups were given in \cite{Hempel2000}.
For arbitrary finite $p$-groups, \cite{Wall1961} studied general relations and inequalities
between the numbers of subgroups of each order and each index, using Eulerian functions.
For metacyclic $p$-groups, subgroup-counting formulas were obtained in \cite{Berkovich2011}, \cite{Mann2010}.
In particular, these results imply that for an odd prime $p$ and a fixed integer $\ell\ge 1$,
any two metacyclic $p$-groups of order $p^{\ell}$ with the same exponent have the same group zeta function.
Thus, for metacyclic $p$-groups with $p\ge 3$, the problem posed in \cite{Hironaka2017}
admits a partial answer.

For $p=2$, the subgroup-count formulas in \cite{Berkovich2011} are given by case distinctions.
As summarized in Fact~\ref{fac:metacyclic-2-subgroup-numbers}, these distinctions depend on the structure of certain quotient groups.
They determine the numbers $a_{2^{t}}(G)$ for each individual metacyclic $2$-group.
On the other hand, they do not explicitly describe which metacyclic $2$-groups are non-isomorphic
but have the same group zeta function.

In this paper, we focus primarily on the case $p=2$.
For a prime $p$ and integers $m,n\ge 1$, we denote by $G(p,m,n,k)$ the group defined in \eqref{eq:Gpmnk} below.
By \cite{Hempel2000}, every finite metacyclic $p$-group of split type is isomorphic to $G(p,m,n,k)$
for some integers $m,n\ge 1$ and some "valid" $k$.
For $p=2$, we give a necessary and sufficient condition for two non-isomorphic groups in this family
to have the same group zeta function.
The condition depends on the parameters $m$ and $n$, and it is expressed in terms of the $2$-adic valuations
$v_{2}(k+1)$, where $v_{2}(\,\cdot\,)$ denotes the $2$-adic valuation.

For $p\ge 3$, our framework also shows that any two metacyclic $p$-groups of split type
of order $p^{m+n}$ with the same exponent have the same group zeta function.

\begin{thm}[Main Theorem]\label{thm:metamaintheorem}
Let $p$ be a prime and let $m,n\ge1$ be integers.
Set $H:=\mathbb{Z}_{p^{m}}=\langle a\rangle$ and
$K:=\mathbb{Z}_{p^{n}}=\langle b\rangle$, and define
\begin{equation}\label{eq:Gpmnk}
G(p,m,n,k)
  :=\langle a,b \mid a^{p^{m}}=b^{p^{n}}=\id,\ bab^{-1}=a^{k}\rangle.
\end{equation}
We say that $G(p,m,n,k)$ is "\emph{valid}" if this presentation yields a finite
metacyclic $p$-group of split type.
In this case, $G(p,m,n,k)=H\rtimes_{\phi}K$, where $\phi\colon K\to \Aut(H)$ is the homomorphism determined by
$\phi(b)(a)=bab^{-1}=a^{k}$.
Then the following hold.
\begin{enumerate}
 \item[(I)] the group $G(p,m,n,k)$ is valid if and only if
 \begin{equation}\label{eq:welldefi条件intro}
  k^{p^{n}}\equiv 1 \pmod{p^{m}}.
\end{equation}

 \item[(II)] Assume that $k_1$ and $k_2$ satisfy \eqref{eq:welldefi条件intro}. Then the following equivalence holds:
\[
   G(p,m,n,k_{1})\simeq G(p,m,n,k_{2})
  \iff
  \exists v\in (\mathbb{Z}/p^{n}\mathbb{Z})^{\times}
  \text{ such that } k_{2}\equiv k_{1}^{v} \pmod{p^{m}}.
\]

 \item[(III)] Assume that $k_1$ and $k_2$ satisfy \eqref{eq:welldefi条件intro}. A necessary and sufficient condition for
 $\zeta_{G(p,m,n,k_{1})}(s)=\zeta_{G(p,m,n,k_{2})}(s)$ to hold
 is given as follows:
 \begin{itemize}
 \item[(1)] If $p$ is an odd prime, then for any $k_1,k_2$
 satisfying \eqref{eq:welldefi条件intro}, we have
 $\zeta_{G(p,m,n,k_{1})}(s)=\zeta_{G(p,m,n,k_{2})}(s)$.

 \item[(2)] If $p=2$ and $n<m$, then for any $k_1,k_2$
 satisfying \eqref{eq:welldefi条件intro}, we have
 $\zeta_{G(p,m,n,k_{1})}(s)=\zeta_{G(p,m,n,k_{2})}(s)$
 if and only if one of the following conditions holds:
 \[
 \begin{cases}
 \text{\textup{(A)}}\ \ v_2(k_1+1)=v_2(k_2+1)\le m-n,\\
 \text{\textup{(B)}}\ \ \min\{v_2(k_1+1), v_2(k_2+1)\}> m-n.
 \end{cases}
 \]

 \item[(3)] If $p=2$ and $n\ge m$, then for any $k_1,k_2$
 satisfying \eqref{eq:welldefi条件intro}, we have
 $\zeta_{G(p,m,n,k_{1})}(s)=\zeta_{G(p,m,n,k_{2})}(s)$.
 \end{itemize}
\end{enumerate}
In particular, by choosing representatives $k_{i}$ of the isomorphism classes
as in {\rm(II)} and applying the criterion in {\rm(III)}, we can systematically
obtain examples of non-isomorphic metacyclic $p$-groups of split type whose zeta
functions coincide.
\end{thm}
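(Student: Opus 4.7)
The plan is to establish the three parts in turn, using the semidirect-product viewpoint throughout and invoking Fact~\ref{fac:metacyclic-2-subgroup-numbers} for the technical content of Part (III).

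For Part (I), I would note that the relation $bab^{-1}=a^{k}$ extends to a split extension $H\rtimes_{\phi}K$ precisely when $b\mapsto(a\mapsto a^{k})$ defines a homomorphism $\phi:\mathbb{Z}/p^{n}\to\Aut(H)=(\mathbb{Z}/p^{m})^{\times}$, which is equivalent to $k^{p^{n}}\equiv 1\pmod{p^{m}}$. For Part (II), since $\Aut(H)$ is abelian, the standard criterion for isomorphism of semidirect products reduces to requiring an automorphism $\beta\in\Aut(K)$ with $\phi_{2}=\phi_{1}\circ\beta^{-1}$; writing $\beta(b)=b^{v^{-1}}$ with $v\in(\mathbb{Z}/p^{n})^{\times}$ translates this into $k_{2}\equiv k_{1}^{v}\pmod{p^{m}}$, and the same equation read backwards gives the converse. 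A brief check rules out isomorphisms not respecting this data by invoking characteristic features of $G(p,m,n,k)$ coming from Hempel's classification.

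For Part (III), the starting observation is that the exponent of $G(p,m,n,k)$ is $p^{\max(m,n)}$, independent of $k$ (since $a$ and $b$ have orders $p^{m}$ and $p^{n}$, and a standard manipulation in the split extension shows every element has order dividing $p^{\max(m,n)}$). Combined with the Berkovich--Mann subgroup-counting formulas, which have no exceptional cases for odd $p$, this immediately yields (III)(1). For $p=2$ I would apply Fact~\ref{fac:metacyclic-2-subgroup-numbers} to $G(2,m,n,k)$. The key computation is the 2-adic identity
\[
v_{2}(k^{2^{j}}-1)=\begin{cases}v_{2}(k-1)+j&\text{if }v_{2}(k+1)=1,\\ v_{2}(k+1)+j&\text{if }v_{2}(k+1)\ge 2,\end{cases}\qquad j\ge 1,
\]
which controls the order of the iterated action of $b^{2^{j}}$ on $\langle a\rangle$. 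Together with the validity condition, this forces the valid values of $v_{2}(k+1)$ to be either $1$ or at least $m-n$, and it identifies which case of the Fact applies to $G(2,m,n,k)$. For (III)(3) ($n\ge m$) the threshold $m-n$ is non-positive, so the case distinctions of the Fact collapse and each case yields the same count for all valid $k$. For (III)(2) ($n<m$) the case of the Fact into which $G$ falls, together with its internal parameter, depends on $k$ only through $v_{2}(k+1)$ and saturates once $v_{2}(k+1)$ exceeds $m-n$; this yields the sufficiency of conditions (A) and (B). Necessity is shown by exhibiting a specific order $2^{t}$ (the count of involutions at $t=1$ is a natural first candidate) at which the counts differ between groups violating both (A) and (B).

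The main obstacle is the precise bookkeeping in Part (III)(2): matching each case of Fact~\ref{fac:metacyclic-2-subgroup-numbers} to a range of values of $v_{2}(k+1)$, verifying that the subgroup counts are genuinely constant on each equivalence class determined by (A) or (B), and producing the separating witnesses. A subtlety worth flagging is that when $v_{2}(k+1)=1$ many valid $k$'s exist with different values of $v_{2}(k-1)$; the proof must confirm that $v_{2}(k-1)$ does not leak into the subgroup counts, so that the \emph{abelian-like} and \emph{modular-like} groups in this range share a zeta function.
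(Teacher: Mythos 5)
Parts (I), (II) and (III)(1) of your plan are essentially the paper's: (I) and (II) are imported from Hempel and Humphries--Skabelund (note that the backward direction of (II) --- that \emph{every} isomorphism forces $k_2\equiv k_1^{v}\pmod{p^m}$ --- is exactly the content of the cited classification, not a ``brief check''), and (III)(1) follows from quasi-regularity plus the Berkovich--Mann count, as in Corollary~\ref{cor:pge3-all-similar}. The genuine gap is in (III)(2)--(3), which is the heart of the theorem. Your route goes through Fact~\ref{fac:metacyclic-2-subgroup-numbers}, and to use it you must first determine, for every valid $k$, the invariants $w$, $R=\Omega_w(G)$ and the isomorphism type of $G/R$ from Fact~\ref{fac:metacyclic-2-RG-quotient}, and then verify that the resulting counts depend on $k$ only through $v_2(k+1)$ up to the threshold $m-n$. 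None of this bookkeeping is carried out: you explicitly defer it as ``the main obstacle,'' and the worry you flag about $v_2(k-1)$ leaking into the counts is left unresolved. Worse, the route is not even self-contained within the paper: for $n<m$ and large $v_2(k+1)$ one lands in case (5) of the Fact (for instance $(m,n,k)=(5,3,31)$ gives $w=2$ and $G/R\simeq D_{16}$), and the formulas for that case are deliberately omitted, so you would have to import and analyse Berkovich's Theorems 3.9--3.12 with their supplements.

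The necessity direction is also not secured by your plan: the proposed separating witness, the count of involutions, cannot work, since for every valid $k$ (with $n\ge 2$) one has $a_{2}\bigl(G(2,m,n,k)\bigr)=3$; in the paper's notation \eqref{eq:a2t-by-Eij} the only $k$-dependent counts are $a_{2^{\,n+i}}$ for $0\le i\le m-1$, and the separation first appears at order $2^{n}$ (e.g.\ for $(m,n)=(5,3)$, $a_{8}$ distinguishes $k=1$ from $k=3$ while $a_{2}=3$ for both). The paper avoids Berkovich's case analysis altogether: it parametrizes subgroups of $H\rtimes_{\phi}K$ by their intersection with $H$ and image in $K$, identifies each fibre with a cocycle group $Z^{1}(B_j,H/A_i)$, computes its order as the kernel of a norm map via the LTE Lemma~\ref{lem:LTElemma} (Proposition~\ref{prop:KerNij-podd-2-case}), and then reads conditions (A)/(B) directly off the exponents $E_{i,n}(k)=\min\{m-i,\ \cdot\,\}$, which depend only on $v_2(k\pm1)$ by construction. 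As written, your proposal does not prove (III)(2)--(3); completing it along your lines would require the full $w$, $R$, $G/R$ computation for $G(2,m,n,k)$ together with the omitted case-(5) formulas, which is at least as much work as the paper's direct cocycle count.
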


\section{Notation and Preliminaries}\label{section:meta-Notation and Preliminary-start}

\begin{defi}
Throughout this paper, all groups are assumed to be finite.
We use the following notation and terminology.
\begin{itemize}
  \item $S(G)$: the set of all subgroups of a group $G$,
  \item $a_{n}(G) := \#\{\,H\in S(G)\mid |H|=n\,\}$,
  \item $p$: a prime,
  \item $\zeta_{G}(s) := \sum_{n=1}^{\infty} \frac{a_{n}(G)}{n^{s}}$,
  \item $H := \langle a \rangle \simeq \mathbb{Z}_{p^{m}}$,
        $K := \langle b\rangle \simeq \mathbb{Z}_{p^{n}}$,
  \item $G(p,m,n,\lambda,k) :=
  \langle a,b \mid
    a^{p^{m}}=\id,\ 
    b^{p^{n}}=a^{p^{\lambda}},\ 
    bab^{-1} = a^{k}
  \rangle$,
  \item $G(p,m,n,k)
  :=\langle a,b \mid a^{p^{m}}=b^{p^{n}}=\id,\ bab^{-1}=a^{k}\rangle$,
  \item A group $L$ is called a \emph{section} of a group $G$ if there exist
        a subgroup $A\subset G$ and a normal subgroup $B\lhd A$ such that
        $L\simeq A/B$,
  \item For a prime $p$ and $d\in \Z$, we denote by $v_p(d)$ the $p$-adic valuation of $d$, and we adopt the convention $v_p(0):=\infty$,
  \item For a finite $p$-group $P$, we define $\mho_{1}(P):=\langle\,x^{p} \mid x\in P\,\rangle$,
  \item For a finite group $G$, the \emph{exponent} of $G$ is
$\exp(G):=\mathrm{lcm}\{\,|g|\mid g\in G\,\}$.
\end{itemize}
\end{defi}

By \cite{Hempel2000}, every finite metacyclic $p$-group is isomorphic to a group
given by a presentation of the form $G(p,m,n,\lambda,k)$ for suitable parameters.
Moreover, a finite metacyclic $p$-group is split if and only if it is isomorphic
to one given by a presentation of the form $G(p,m,n,k)$.
Therefore, it suffices to work with these presentations.
We record the necessary and sufficient congruence conditions under which the above presentations
define finite metacyclic $p$-groups.

We say that $G(p,m,n,\lambda,k)$ is \emph{valid} if this presentation yields a finite metacyclic $p$-group.
We say that $G(p,m,n,k)$ is \emph{valid} if this presentation yields a finite metacyclic $p$-group of split type.

\begin{fac}[\cite{Hempel2000}]
The group $G(p,m,n,\lambda,k)$ is valid if and only if
\[
k^{p^{n}} \equiv 1 \pmod{p^{m}}
\quad\text{and}\quad
p^{\lambda}(k-1) \equiv 0 \pmod{p^{m}}.
\]
Moreover, the group $G(p,m,n,k)$ is valid if and only if
\begin{equation}\label{eq:well-definedness}
k^{p^{n}} \equiv 1 \pmod{p^{m}}.
\end{equation}
\end{fac}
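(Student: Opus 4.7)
The plan is to establish both characterizations by proving necessity and sufficiency. Since the split presentation $G(p,m,n,k)$ formally corresponds to the case $\lambda=m$ of $G(p,m,n,\lambda,k)$ (where $b^{p^n}=a^{p^m}=\id$ and the second congruence $p^m(k-1)\equiv 0\pmod{p^m}$ is automatic), it is enough to handle the general case and then specialize. The two congruences arise naturally: one from iterating conjugation (which must collapse after $p^n$ steps), and one from the requirement that $b$ commute with its own power $b^{p^n}=a^{p^\lambda}$.

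For necessity, I would assume the presentation defines a finite $p$-group $G$ of order $p^{m+n}$ in which $\langle a\rangle$ is cyclic of order $p^m$, and then extract both congruences by direct manipulation of the defining relations. Since $b$ commutes with $b^{p^n}$ but $b^{p^n}=a^{p^\lambda}$, conjugation yields $(bab^{-1})^{p^\lambda}=a^{p^\lambda}$, i.e.\ $a^{kp^\lambda}=a^{p^\lambda}$, which forces $p^\lambda(k-1)\equiv 0\pmod{p^m}$. Iterating $bab^{-1}=a^k$ produces $b^{p^n}ab^{-p^n}=a^{k^{p^n}}$; since $b^{p^n}=a^{p^\lambda}$ lies in $\langle a\rangle$ and hence commutes with $a$, the left-hand side equals $a$, so $a^{k^{p^n}-1}=\id$ and therefore $k^{p^n}\equiv 1\pmod{p^m}$.

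For sufficiency I would construct $G(p,m,n,\lambda,k)$ explicitly. In the split case $\lambda=m$, the congruence $k^{p^n}\equiv 1\pmod{p^m}$ says exactly that $a\mapsto a^k$ is an automorphism of $\mathbb{Z}_{p^m}$ of order dividing $p^n$, so $\phi(b)(a)=a^k$ extends to a homomorphism $\phi\colon\mathbb{Z}_{p^n}\to\Aut(\mathbb{Z}_{p^m})$, and the semidirect product $\mathbb{Z}_{p^m}\rtimes_\phi\mathbb{Z}_{p^n}$ realises the presentation and has order $p^{m+n}$. For general $\lambda$, I would build $G$ as a (possibly non-split) extension
\[
1\longrightarrow \mathbb{Z}_{p^m}\longrightarrow G\longrightarrow\mathbb{Z}_{p^n}\longrightarrow 1
\]
with action $\phi$ and a $2$-cocycle encoding $b^{p^n}=a^{p^\lambda}$; the second congruence $p^\lambda(k-1)\equiv 0\pmod{p^m}$ is precisely the cocycle condition ensuring that such an extension with the prescribed action exists, and the first congruence ensures the action is well-defined. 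The presentation then maps onto this concrete group.

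The main obstacle will be the order-lower-bound step: showing that no unintended collapse occurs, so that $a$ really has order $p^m$ and $|G|=p^{m+n}$. I expect to address this by a normal-form argument, writing every word as $a^{i}b^{j}$ with $0\le i<p^m$ and $0\le j<p^n$, pushing all $b$'s to the right via $bab^{-1}=a^k$ and reducing the $b$-exponent using $b^{p^n}=a^{p^\lambda}$. The two congruences are precisely what is needed for this reduction to be consistent (equivalently, for the cocycle identity to hold), so the constructed group has the predicted order and the surjection from the presentation is an isomorphism.
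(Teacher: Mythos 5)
The paper offers no proof of this Fact; it is quoted from \cite{Hempel2000}. Your argument is correct and is essentially the standard one: the necessity direction (extracting $k^{p^n}\equiv 1\pmod{p^m}$ from $b^{p^n}ab^{-p^n}=a^{k^{p^n}}$ with $b^{p^n}\in\langle a\rangle$ central over $a$, and $p^{\lambda}(k-1)\equiv 0\pmod{p^m}$ from $b$ commuting with $b^{p^n}=a^{p^{\lambda}}$) is exactly right, and the sufficiency direction is the cyclic extension theorem of Zassenhaus: the two congruences are precisely the conditions $\alpha^{p^n}=\id$ and $\alpha(z)=z$ for $\alpha(a)=a^k$, $z=a^{p^{\lambda}}$, which guarantee the extension of $\mathbb{Z}_{p^m}$ by $\mathbb{Z}_{p^n}$ exists; combined with von Dyck and your normal-form bound this pins the order at $p^{m+n}$. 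The only point to make explicit is the reading of ``valid'' as ``the presented group has order $p^{m+n}$ with $a$ of order exactly $p^m$,'' which your necessity argument tacitly (and correctly) assumes.
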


In this paper, we consider only valid groups of split type of the form $G(p,m,n,k)$.

\begin{fac}[\cite{HumphriesSkabelund2015}]\label{fac:split-meta-iso}
Let $\alpha,\beta\ge 1$ and let $ G_i = \langle a,b \mid a^{\alpha}=\id,\ b^{\beta}=\id,\ bab^{-1} = a^{\gamma_i}\rangle$ for $i=1,2$. Then the following equivalence holds:
\[
  G_1 \simeq G_2
  \quad\Longleftrightarrow\quad
  \gamma_1,\gamma_2\in (\mathbb{Z}/\alpha\mathbb{Z})^{\times}
  \text{ and }
  \langle \gamma_1\rangle = \langle \gamma_2\rangle
  \text{ as subgroups of } (\mathbb{Z}/\alpha\mathbb{Z})^{\times}.
\]

\end{fac}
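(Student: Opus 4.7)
The plan is to prove the two implications separately.

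For the $(\Leftarrow)$ direction, the approach is explicit construction. Suppose $\gamma_1,\gamma_2\in(\mathbb{Z}/\alpha\mathbb{Z})^{\times}$ with $\langle\gamma_1\rangle=\langle\gamma_2\rangle$, and let $d$ denote their common order in $(\mathbb{Z}/\alpha\mathbb{Z})^{\times}$. Since the defining relations force $\gamma_i^{\beta}\equiv 1\pmod\alpha$, we have $d\mid\beta$. Because $\gamma_1$ generates $\langle\gamma_2\rangle$, there exists an integer $u$ coprime to $d$ with $\gamma_1\equiv\gamma_2^{u}\pmod\alpha$. Using $d\mid\beta$ and the Chinese Remainder Theorem, I would replace $u$ by an integer $u'\equiv u\pmod d$ with $\gcd(u',\beta)=1$; such a lift exists because, for each prime $p\mid\beta$ not dividing $d$, one can independently impose $u'\not\equiv 0\pmod p$. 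Then I would define $\psi\colon G_1\to G_2$ on generators by $\psi(a)=a$ and $\psi(b)=b^{u'}$ and verify the three defining relations of $G_1$: the two order relations hold trivially, and the conjugation relation reduces to $\gamma_2^{u'}\equiv\gamma_2^{u}\equiv\gamma_1\pmod\alpha$. Because $\gcd(u',\beta)=1$, the element $\psi(b)=b^{u'}$ generates $\langle b\rangle\subseteq G_2$, so $\psi$ is surjective; combined with $|G_1|=|G_2|$ this makes $\psi$ an isomorphism.

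For the $(\Rightarrow)$ direction, the plan is to identify $\langle\gamma_i\rangle\subseteq(\mathbb{Z}/\alpha\mathbb{Z})^{\times}$ as a group-theoretic invariant of $G_i$ and then to invoke the classification of split extensions of cyclic groups by cyclic groups. With $N=\langle a\rangle\lhd G_i$, the conjugation action induces a homomorphism $G_i\to\Aut(N)=(\mathbb{Z}/\alpha\mathbb{Z})^{\times}$ whose image is exactly $\langle\gamma_i\rangle$. Since $\Aut(N)$ is abelian, any two actions $\phi_1,\phi_2\colon\langle b\rangle\to\Aut(N)$ producing isomorphic split extensions $N\rtimes_{\phi_i}\langle b\rangle$ must satisfy $\phi_2=\phi_1\circ\tau$ for some $\tau\in\Aut(\langle b\rangle)$; writing $\tau(b)=b^{v}$ with $\gcd(v,\beta)=1$ yields $\gamma_2\equiv\gamma_1^{v}\pmod\alpha$, from which $\langle\gamma_1\rangle=\langle\gamma_2\rangle$ follows.

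The main obstacle is that an abstract isomorphism $\psi\colon G_1\to G_2$ need not send $\langle a\rangle\subseteq G_1$ to $\langle a\rangle\subseteq G_2$: in general $\langle a\rangle$ is not a characteristic subgroup of a split metacyclic group, so the classification-of-extensions argument does not apply out of the box. To bridge this gap, I would argue that after composing $\psi$ with a suitable automorphism of $G_2$ one may assume $\psi(\langle a\rangle)=\langle a\rangle$, either by showing that any cyclic normal subgroup of order $\alpha$ in $G_2$ is $\Aut(G_2)$-conjugate to $\langle a\rangle$, or by direct computation: writing $\psi(a)=a^{r}b^{s}$ and $\psi(b)=a^{x}b^{y}$ in $G_2$, the constraints $|\psi(a)|=\alpha$, $|\psi(b)|=\beta$, and $\psi(b)\psi(a)\psi(b)^{-1}=\psi(a)^{\gamma_1}$ yield, after expansion with the semidirect-product multiplication law, a congruence of the shape $\gamma_1\equiv\gamma_2^{v}\pmod\alpha$ with $\gcd(v,\beta)=1$. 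This congruence bookkeeping is the technical heart of the argument; it also explains why the condition $\gamma_i\in(\mathbb{Z}/\alpha\mathbb{Z})^{\times}$ must appear in the statement, since degenerate choices with $\gcd(\gamma_i,\alpha)>1$ collapse $\langle a\rangle$ to a subgroup of order strictly less than $\alpha$ and destroy the structural matching between $G_1$ and $G_2$.
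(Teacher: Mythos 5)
This statement is quoted in the paper as a \emph{Fact} with a citation to Humphries--Skabelund (2015); the paper supplies no proof of its own, so your attempt has to stand on its own rather than be matched against an in-paper argument. Your $(\Leftarrow)$ direction is essentially complete and correct: the reduction to a lift $u'$ of $u$ modulo $d$ with $\gcd(u',\beta)=1$ via CRT is the standard device, and the verification that $a\mapsto a$, $b\mapsto b^{u'}$ respects the relations and is surjective is sound. (One caveat: your step ``the defining relations force $\gamma_i^{\beta}\equiv 1\pmod{\alpha}$'' only gives $\gamma_i^{\beta}\equiv 1$ modulo the \emph{actual} order of $a$, which can be smaller than $\alpha$ if the presentation collapses; the Fact is really being asserted, and used in the paper, only for non-degenerate presentations where $\gamma_i^{\beta}\equiv 1\pmod{\alpha}$ and $|G_i|=\alpha\beta$, and you should make that hypothesis explicit rather than derive it.)

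The $(\Rightarrow)$ direction, however, contains a genuine gap, and you have named it yourself without closing it. The entire difficulty is that an abstract isomorphism $\psi\colon G_1\to G_2$ need not carry $\langle a\rangle\le G_1$ to $\langle a\rangle\le G_2$, so the extension-theoretic criterion ``$\phi_2=\phi_1\circ\tau$ for some $\tau\in\Aut(\langle b\rangle)$'' does not apply; everything after that observation in your write-up is a plan, not an argument. Your first proposed repair --- that every cyclic normal subgroup of order $\alpha$ (with cyclic quotient of order $\beta$) in $G_2$ is $\Aut(G_2)$-conjugate to $\langle a\rangle$ --- is itself a nontrivial assertion that you neither prove nor reference, and it is not obviously true; already in $\mathbb{Z}_4\times\mathbb{Z}_2$ the image $\psi(a)$ can have a nontrivial $b$-component, so the phenomenon you are trying to rule out does occur at the level of subgroups. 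Your second proposed repair --- expanding $\psi(a)=a^{r}b^{s}$, $\psi(b)=a^{x}b^{y}$ and extracting $\gamma_1\equiv\gamma_2^{v}\pmod{\alpha}$ with $\gcd(v,\beta)=1$ --- is exactly the content of the theorem: controlling $s$ (showing the $b$-component of $\psi(a)$ does not change the image of the conjugation action in $(\mathbb{Z}/\alpha\mathbb{Z})^{\times}$) is where all the work lies, and calling it ``congruence bookkeeping'' and ``the technical heart'' without performing it means the implication is not established. To make this direction complete you would need either to carry out that computation in full, or to exhibit a genuinely $\psi$-invariant construction of $\langle\gamma_i\rangle$ (for instance via characteristic subgroups such as $[G_i,G_i]$ and the action on suitable characteristic quotients), or simply to cite the result as the paper does.
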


\begin{cor}\label{cor:isom-Gpmn}
Let $m,n\ge 1$, and let $k_{1},k_{2}$ be integers satisfying
\eqref{eq:well-definedness}.
Then $G(p,m,n,k_{1})\simeq G(p,m,n,k_{2})$ holds if and only if
\begin{equation}\label{eq:同型の条件メタサイクル}
  \exists v\in (\mathbb{Z}/p^{n}\mathbb{Z})^{\times}
  \textup{ such that }
  k_{2}\equiv k_{1}^{v} \pmod{p^{m}}.
\end{equation}
\end{cor}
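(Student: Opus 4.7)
The plan is to read off the result from Fact~\ref{fac:split-meta-iso}. The first step is to observe that any $k$ satisfying the validity congruence $k^{p^{n}}\equiv 1\pmod{p^{m}}$ is automatically coprime to $p^{m}$ and hence lies in $(\mathbb{Z}/p^{m}\mathbb{Z})^{\times}$. This lets me apply Fact~\ref{fac:split-meta-iso} with $\alpha=p^{m}$ and $\beta=p^{n}$, so that $G(p,m,n,k_{1})\simeq G(p,m,n,k_{2})$ reduces to the equality of cyclic subgroups $\langle k_{1}\rangle=\langle k_{2}\rangle$ inside $(\mathbb{Z}/p^{m}\mathbb{Z})^{\times}$.

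The remaining step is to translate this subgroup equality into the exponent form stated in the corollary. Because $k_{i}^{p^{n}}\equiv 1\pmod{p^{m}}$, the order of $k_{i}$ in $(\mathbb{Z}/p^{m}\mathbb{Z})^{\times}$ is a $p$-power $p^{t_{i}}$ with $t_{i}\le n$. Hence $k_{1}^{v}$ generates $\langle k_{1}\rangle$ if and only if $\gcd(v,p^{t_{1}})=1$; since $t_{1}\le n$, this condition on $v$ is equivalent to $v\in(\mathbb{Z}/p^{n}\mathbb{Z})^{\times}$.

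For the forward implication, assuming $\langle k_{1}\rangle=\langle k_{2}\rangle$, I would write $k_{2}\equiv k_{1}^{v}\pmod{p^{m}}$ for a generator exponent $v$, which by the observation above can be chosen in $(\mathbb{Z}/p^{n}\mathbb{Z})^{\times}$. For the converse, assuming $k_{2}\equiv k_{1}^{v}\pmod{p^{m}}$ with $v\in(\mathbb{Z}/p^{n}\mathbb{Z})^{\times}$, we have $k_{2}\in\langle k_{1}\rangle$, and since $\gcd(v,p^{t_{1}})=1$ the element $k_{2}$ has the same order $p^{t_{1}}$ as $k_{1}$, forcing $\langle k_{2}\rangle=\langle k_{1}\rangle$. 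No step is substantial; the only point that needs a small amount of care is matching invertibility modulo $p^{n}$ with generation of the $p$-group $\langle k_{i}\rangle$, whose order $p^{t_{i}}$ is at most $p^{n}$ but may be strictly smaller.
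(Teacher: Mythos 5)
Your proposal is correct and follows the same route the paper intends: the corollary is read off from Fact~\ref{fac:split-meta-iso} with $\alpha=p^{m}$, $\beta=p^{n}$, using that validity forces $k_{i}\in(\mathbb{Z}/p^{m}\mathbb{Z})^{\times}$ of $p$-power order dividing $p^{n}$, so that $\langle k_{1}\rangle=\langle k_{2}\rangle$ translates into the existence of a unit exponent $v$ modulo $p^{n}$. Your spelled-out translation (including the choice of $v$ coprime to $p$ and the well-definedness of $k_{1}^{v}$ modulo $p^{m}$) is exactly the content the paper leaves implicit.
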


\begin{defi}\label{def:quasi-regular}
Let $p$ be a prime and let $G$ be a finite metacyclic $p$-group.
We say that $G$ is \emph{quasi-regular} if the following conditions are satisfied:
\begin{description}[labelwidth=0.7cm]
  \item[\textup{(QR1)}]
    If $p=2$, then $G$ has no non-abelian section of order $8$.
  \item[\textup{(QR2)}]
    For every section $H$ of $G$, the set $\{\,x^{p}\mid x\in H\,\}$ is a subgroup of $H$,
    and it coincides with $\mho_{1}(H)$.
\end{description}
\end{defi}

\begin{fac}[\cite{Mann2010}]\label{fac:メタサイクル群の準正則性}
Every finite metacyclic $p$-group with $p>2$ is quasi-regular.
\end{fac}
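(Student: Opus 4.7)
The plan is to reduce the claim to verifying condition (QR2) for $G$ itself, since (QR1) is vacuous for $p\ge 3$. First I would check that the class of finite metacyclic $p$-groups is closed under taking sections: if $N\lhd G$ with $N$ and $G/N$ cyclic and $L=A/B$ is any section, then the chain $A\cap N\lhd A$ makes $A$ metacyclic, and quotienting by $B$ preserves cyclicity of the normal subgroup and of the quotient. Hence it suffices to prove $\{x^{p}:x\in G\}=\mho_{1}(G)$ for one arbitrary finite metacyclic $p$-group $G$ with $p$ odd.

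Next I would exploit the odd-$p$ hypothesis to constrain the commutation relation. Writing $G=\langle a,b\rangle$ with $\langle a\rangle\cong\mathbb{Z}_{p^{m}}$ normal and $bab^{-1}=a^{k}$, the element $k\in(\mathbb{Z}/p^{m}\mathbb{Z})^{\times}$ has $p$-power order. For odd $p$ the group $(\mathbb{Z}/p^{m}\mathbb{Z})^{\times}$ is cyclic of order $p^{m-1}(p-1)$, and its Sylow $p$-subgroup is precisely the kernel of reduction modulo $p$; therefore $k\equiv 1\pmod{p}$. An induction on $p$ using the defining relation yields
\[
 (a^{i}b^{j})^{p} \;=\; a^{\,i\,T_{j}}\,b^{jp},\qquad T_{j}:=1+k^{j}+k^{2j}+\cdots+k^{(p-1)j}.
\]
Because each $k^{sj}\equiv 1\pmod{p}$ and the arithmetic progression sum $\sum_{s=0}^{p-1}s=(p-1)p/2$ is divisible by $p$ (where the oddness of $p$ is essential), we obtain $T_{j}\equiv p\pmod{p^{2}}$; in particular $T_{j}/p$ is a unit modulo $p^{m-1}$. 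Varying $i$ and $j$ therefore realizes every element of $\langle a^{p},b^{p}\rangle$ as the $p$-th power of a suitable $a^{i}b^{j}$.

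Combining the trivial containment $\{x^{p}:x\in G\}\subseteq\mho_{1}(G)$ with the equalities $\mho_{1}(G)=\langle a^{p},b^{p}\rangle$ and $\langle a^{p},b^{p}\rangle\subseteq\{x^{p}:x\in G\}$ (both byproducts of the formula above) completes the verification of (QR2). The main obstacle is precisely the step showing that $\{x^{p}:x\in G\}$ is already closed under the group operation rather than only generating $\mho_{1}(G)$. The cleanest treatment is to invoke P.~Hall's regularity criterion: if $(xy)^{p}\equiv x^{p}y^{p}$ modulo $p$-th powers of commutators in $\langle x,y\rangle$ for all $x,y$, then $G$ is \emph{regular} and $\{x^{p}:x\in G\}=\mho_{1}(G)$ as subgroups. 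The congruence $k\equiv 1\pmod{p}$ supplies exactly this collection-formula input for any two-generator section, and the $p=2$ failure mode (in which $k\equiv -1\pmod{4}$ can arise, yielding the non-abelian sections of order $8$ excluded by (QR1)) pinpoints why the theorem is stated only for odd $p$.
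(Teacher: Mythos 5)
Your argument is essentially correct, but note that the paper does not prove this statement at all: it is quoted as a Fact from Mann (2010), so any proof here is necessarily a different route from the paper's. Your route is a sensible elementary one. The reduction is sound: sections of metacyclic groups are metacyclic (your chain $A\cap N\lhd A$ plus passing to $A/B$ works, since $(A\cap N)B/B$ is cyclic and $(A/B)\big/\bigl((A\cap N)B/B\bigr)\simeq A/(A\cap N)B$ is a quotient of the cyclic group $A/(A\cap N)$), so it suffices to verify (QR2) for an arbitrary odd metacyclic $p$-group, and (QR1) is vacuous. The computational core is also correct and, importantly, never uses splitness: with $\langle a\rangle\lhd G$ and $bab^{-1}=a^{k}$, the image of $k$ has $p$-power order, hence $k\equiv 1\pmod p$ for odd $p$; the identity $(a^{i}b^{j})^{p}=a^{iT_{j}}b^{jp}$ with $T_{j}=\sum_{s=0}^{p-1}k^{sj}$ holds, and $T_{j}\equiv p\pmod{p^{2}}$ because $p\mid \tfrac{p(p-1)}{2}$ for odd $p$. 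Since $v_{p}(T_{j})=1$ and $\langle a^{p}\rangle\lhd G$, varying $i$ for fixed $j$ sweeps out the whole coset $\langle a^{p}\rangle b^{jp}$, so the set of $p$-th powers is exactly $\langle a^{p}\rangle\langle b^{p}\rangle=\langle a^{p},b^{p}\rangle$, which is simultaneously a subgroup and $\mho_{1}(G)$; this settles (QR2), and the $p=2$ failure (where $T_{j}=1+k^{j}$ can have large $2$-adic valuation) correctly explains why oddness is needed. The only weak spot is your closing paragraph: the appeal to P.~Hall regularity is redundant given the explicit power computation, and as phrased it is close to circular, since the displayed congruence is essentially the definition of regularity and you do not actually verify it for arbitrary pairs $x,y$ (nor do you need to). I would drop that paragraph, or replace it by the standard citation that odd $p$-groups with cyclic derived subgroup are regular, and let the direct computation carry the proof. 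Compared with the paper, which simply defers to Mann, your approach buys a short self-contained verification, at the cost of proving only what is needed for this family rather than the stronger structural statements in the cited sources.
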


\begin{fac}[\cite{Berkovich2011}]\label{fac:準正則な部分群個数}
Let $G$ be a finite quasi-regular metacyclic $p$-group of order $p^{\ell}$.
Assume that $\exp(G)=p^{e}<p^{\ell}$, and let $0\le t\le \ell$.
Set $f:=\ell-e\ge 1$.
Then the number $a_{p^{t}}(G)$ is given by
\[
  a_{p^{t}}(G)
  =
  \begin{cases}
    \dfrac{p^{t+1}-1}{p-1} & \text{if } t\le f,\\
    \dfrac{p^{f+1}-1}{p-1} & \text{if } f<t\le e,\\
    \dfrac{p^{\,\ell-t+1}-1}{p-1} & \text{if } t>e.
  \end{cases}
\]
\end{fac}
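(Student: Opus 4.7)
The plan is to parametrize every subgroup of $G$ by a triple of integers and then count such triples. Choose a metacyclic presentation of $G$: a cyclic normal subgroup $N=\langle a\rangle\lhd G$ with $|N|=p^{m}$ and $G/N=\langle bN\rangle$ cyclic of order $p^{n}$, so $\ell=m+n$. For an arbitrary $H\le G$, the intersection $H\cap N$ equals $\langle a^{p^{i}}\rangle$ for a unique $0\le i\le m$, and the image of $H$ in $G/N$ equals $\langle b^{p^{j}}N\rangle$ for a unique $0\le j\le n$. When $j<n$, the subgroup $H$ contains an element of the form $a^{u}b^{p^{j}}$, with $u$ determined modulo $p^{i}$. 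Thus $H$ is encoded by a triple $(i,j,u)$ with $0\le u<p^{i}$, and its order equals $p^{(m-i)+(n-j)}$.

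The second step is, for each admissible pair $(i,j)$, to determine which residues $u\pmod{p^{i}}$ produce honest subgroups of the prescribed order, and to count them. Quasi-regularity enters here decisively: by (QR2), for every section of $G$, $p$-th powers coincide with $\mho_{1}$, so the iterated powers $(a^{u}b^{p^{j}})^{p^{r}}$ can be written in the transparent form $a^{u\,S_{r}}b^{p^{j+r}}$ for an explicit integer $S_{r}$ depending only on the conjugation action of $b$ on $a$. This makes the order of $a^{u}b^{p^{j}}$ in $G/\langle a^{p^{i}}\rangle$ straightforward to compute. When $p=2$, condition (QR1) rules out the exceptional order-$8$ behaviour that would otherwise invalidate this power formula. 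The set of valid $u\pmod{p^{i}}$ yielding a subgroup of order $p^{t}$ then turns out to be a union of cosets of a specific subgroup of $\mathbb{Z}/p^{i}\mathbb{Z}$, so its cardinality is a pure power of $p$.

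The final step is to sum the contributions over admissible $(i,j)$. The constraint $(m-i)+(n-j)=t$ together with $0\le i\le m$ and $0\le j\le n$ restricts $(i,j)$ to a finite segment, and the count of valid $u$'s for each pair is bounded both by $p^{i}$ and by a power of $p$ coming from the exponent restriction encoded in $e$. The three regimes $t\le f$, $f<t\le e$, and $t>e$ correspond to three geometric shapes of this segment. In the first regime, every pair contributes a full power, producing $\sum_{s=0}^{t}p^{s}=\frac{p^{t+1}-1}{p-1}$. In the second regime, the exponent saturation at $p^{f}$ kicks in uniformly, producing the constant value $\frac{p^{f+1}-1}{p-1}$. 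In the third regime, the exponent bound truncates the sum symmetrically from above, producing $\frac{p^{\ell-t+1}-1}{p-1}$.

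The main obstacle will be the second step: producing a single clean formula for $(a^{u}b^{p^{j}})^{p^{r}}$ valid in every metacyclic section of $G$, and reading off from it the correct number of valid $u$'s. In the abelian case this is routine; in the non-abelian case, the conjugation $bab^{-1}=a^{\gamma}$ introduces a twist that can potentially contaminate $p$-th powers with commutator terms. The quasi-regular hypotheses (QR1) and (QR2) are precisely what is needed to prevent this contamination and to guarantee the clean $\mho_{1}$-based power law, allowing the counting argument to proceed uniformly across all three regimes.
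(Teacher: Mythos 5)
First, a point of comparison: the paper does not prove this statement at all --- it is imported as a Fact from \cite{Berkovich2011} --- so there is no internal proof to measure your argument against; it has to stand on its own. Your overall strategy is the right one, and in fact it is the same fibration the paper itself uses in Section~3 for the split case: stratify $S(G)$ by the pair $\bigl(H\cap N,\ \pi(H)\bigr)$, count the admissible $u$ in each fiber, and sum. The problem is that what you have written is a plan, not a proof: the entire content of the statement lives in your ``second step,'' which you explicitly flag as the main obstacle and never carry out.

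Concretely, three things are missing. (a) The identity $(a^{u}b^{p^{j}})^{p^{r}}=a^{uS_{r}}b^{p^{j+r}}$ with $S_{r}=1+\gamma^{p^{j}}+\gamma^{2p^{j}}+\cdots$ is a formal consequence of $bab^{-1}=a^{\gamma}$ in \emph{any} metacyclic group and requires no regularity hypothesis, so you have attributed the wrong role to quasi-regularity. What (QR1)/(QR2) actually buy is uniform control of $v_{p}(S_{r})$ --- equivalently, of the kernels of the norm maps across all sections --- which is exactly the computation (cf.\ Proposition~\ref{prop:KerNij-podd-2-case}, where for odd $p$ the lifting-the-exponent lemma gives $|\Ker(N_{i,j})|=p^{\min\{m-i,\,j\}}$, and where for $p=2$ the answer genuinely depends on the twist unless the $D_{8}/Q_{8}$ sections excluded by (QR1) are absent). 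Since you never isolate this, you never establish that each fiber has size a clean power of $p$ independent of $\gamma$. (b) In the non-split case the relation $b^{p^{n}}=a^{p^{\lambda}}$ makes the closure condition on $u$ an inhomogeneous congruence, so a fiber is a coset of a kernel that may be \emph{empty}; your ``union of cosets'' remark does not determine which fibers vanish, and the final count depends on this. (c) You never relate $f=\ell-e$ to the presentation parameters, so the reduction of the resulting sums to the three displayed closed forms in the regimes $t\le f$, $f<t\le e$, $t>e$ is asserted rather than verified (the paper does this arithmetic only for the split odd case, in Remark~\ref{remark:podd-anotherproof}). Until (a)--(c) are supplied, the argument does not yet prove the Fact.
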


\begin{cor}\label{cor:pge3-all-similar}
Let $p\ge 3$ and fix integers $m,n\ge 1$.
Then for any integers $k_{1},k_{2}$ satisfying
\eqref{eq:well-definedness}, we have
\[
  \zeta_{G(p,m,n,k_{1})}(s)
  =
  \zeta_{G(p,m,n,k_{2})}(s).
\]
\end{cor}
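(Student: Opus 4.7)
The plan is to reduce the statement to Fact~\ref{fac:準正則な部分群個数}, which for a quasi-regular metacyclic $p$-group expresses $a_{p^{t}}(G)$ purely in terms of the order $p^{\ell}$ and the exponent $p^{e}$. By Fact~\ref{fac:メタサイクル群の準正則性}, every metacyclic $p$-group with $p>2$ is quasi-regular, so Fact~\ref{fac:準正則な部分群個数} applies to every valid $G=G(p,m,n,k)$. It therefore suffices to verify that both invariants depend only on $(p,m,n)$ and not on the choice of~$k$. The order is immediate from the semidirect product structure $G=H\rtimes_{\phi}K$: we have $|G(p,m,n,k)|=p^{m+n}$, so $\ell=m+n$, and the hypothesis $\exp(G)<p^{\ell}$ of Fact~\ref{fac:準正則な部分群個数} will be automatic once $\exp(G)=p^{\max(m,n)}$ is established, since $m,n\ge 1$.

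The main work is to show $\exp(G(p,m,n,k))=p^{\max(m,n)}$ for every valid~$k$. The inequality $\exp(G)\ge p^{\max(m,n)}$ is immediate from $|a|=p^{m}$ and $|b|=p^{n}$. For the reverse, I would set $N_{i}:=\langle a^{p^{i}},b^{p^{i}}\rangle$ and prove by induction on~$i$ that $\mho_{1}(N_{i})=N_{i+1}$. The inclusion $N_{i+1}\subseteq\mho_{1}(N_{i})$ is clear. For the reverse inclusion, it suffices to show that $N_{i}/N_{i+1}$ is abelian of exponent~$p$; since this quotient is generated by the images of $a^{p^{i}}$ and $b^{p^{i}}$, both of order dividing~$p$ modulo $N_{i+1}$, the only non-trivial check is
\[
[b^{p^{i}},a^{p^{i}}]\;=\;a^{p^{i}(k^{p^{i}}-1)}\in N_{i+1},
\]
equivalently $k^{p^{i}}\equiv 1\pmod{p}$. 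The validity condition gives $k^{p^{n}}\equiv 1\pmod p$, and since $(\mathbb{Z}/p\mathbb{Z})^{\times}$ has order $p-1$ coprime to $p^{n}$ for $p\ge 3$, this forces $k\equiv 1\pmod p$; hence $k^{p^{i}}\equiv 1\pmod p$ for every~$i$ and the induction closes. Applying (QR2) iteratively then gives $\{x^{p^{i}}:x\in G\}=N_{i}$ for all~$i$, and setting $i=\max(m,n)$ yields $N_{i}=\{1\}$ and $\exp(G)\le p^{\max(m,n)}$.

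With $\ell=m+n$ and $e=\max(m,n)$ both independent of~$k$, Fact~\ref{fac:準正則な部分群個数} delivers identical values of $a_{p^{t}}(G(p,m,n,k_{1}))$ and $a_{p^{t}}(G(p,m,n,k_{2}))$ for every~$t$. Since $G$ is a $p$-group, the only nonzero Dirichlet coefficients $a_{n}(G)$ are of this form, so $\zeta_{G(p,m,n,k_{1})}(s)=\zeta_{G(p,m,n,k_{2})}(s)$. The main obstacle I anticipate is the inductive step $\mho_{1}(N_{i})=N_{i+1}$: one must handle the commutator congruence cleanly, and it is precisely the hypothesis $p\ge 3$ that makes $k\equiv 1\pmod p$ automatic from the validity condition, which in turn is what breaks down in the analysis for $p=2$ later in the paper.
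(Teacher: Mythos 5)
Your proposal is correct and follows essentially the same route as the paper: both arguments reduce to quasi-regularity of metacyclic $p$-groups for odd $p$ together with Berkovich's formula expressing $a_{p^{t}}(G)$ solely in terms of $|G|=p^{m+n}$ and $\exp(G)=p^{\max\{m,n\}}$, neither of which depends on $k$; the only difference is that you supply a proof of the exponent claim, which the paper simply asserts. That extra argument is sound, but note that forming the quotient $N_{i}/N_{i+1}$ tacitly assumes $N_{i+1}\trianglelefteq N_{i}$, which you should check (it follows from the same congruence $k\equiv 1\pmod p$: conjugating $b^{p^{i+1}}$ by $a^{p^{i}}$ changes it by an element of $\langle a^{p^{i+1}}\rangle$); alternatively, since $\langle a^{p^{i}}\rangle\lhd G$, every element of $N_{i}$ is $a^{p^{i}s}b^{p^{i}t}$ and a direct computation of its $p$-th power, $a^{\,p^{i}s\sum_{j=0}^{p-1}k^{p^{i}tj}}\,b^{p^{i+1}t}\in N_{i+1}$, avoids quotients (and in fact makes the appeal to (QR2) unnecessary, since the set of $p$-th powers always lies in $\mho_{1}$).
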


\begin{proof}
For fixed $p,m,n$ and any integer $k$ satisfying \eqref{eq:well-definedness},
the order and exponent of $G(p,m,n,k)$ are given by
\[
  |G(p,m,n,k)|=p^{m+n},\qquad
  \exp\bigl(G(p,m,n,k)\bigr)=p^{\max\{m,n\}}<p^{m+n}.
\]
By Fact~\ref{fac:メタサイクル群の準正則性}, each $G(p,m,n,k)$ is a finite
quasi-regular metacyclic $p$-group.
Applying Fact~\ref{fac:準正則な部分群個数}, we see that for finite quasi-regular
metacyclic $p$-groups with the same order $|G|$ and exponent $\exp(G)$ the
numbers $a_{p^{t}}(G)$ coincide for all $t$.
Hence, for any integers $k_{1},k_{2}$ satisfying \eqref{eq:well-definedness}, we obtain, for every integer $t\ge 0$,
\[
  a_{p^{t}}\bigl(G(p,m,n,k_{1})\bigr)
  =
  a_{p^{t}}\bigl(G(p,m,n,k_{2})\bigr).
\]
Therefore, $\zeta_{G(p,m,n,k_{1})}(s)=\zeta_{G(p,m,n,k_{2})}(s)$.
\end{proof}

Thus, for $p\ge 3$, the group zeta function is the same for all groups $G(p,m,n,k)$
with $k$ satisfying \eqref{eq:well-definedness}.
For $p=2$, the following is known.

\begin{fac}[\cite{Berkovich2011}]\label{fac:metacyclic-2-RG-quotient}
Let $G$ be a finite metacyclic $2$-group.
For each integer $i\ge0$, define
\[
  \Omega_i(G) := \{\,x\in G \mid x^{2^i}=\id\,\},\qquad
  w:=\max\{\, i\ge 0 \mid |\Omega_i(G)| = 2^{2i} \,\},\qquad
  R:=\Omega_{w}(G).
\]
Then $|R| = 2^{2w}$, and the quotient $G/R$ is isomorphic to one of
the following:
\begin{itemize}
  \item $\{\id\}$,
  \item $\mathbb{Z}_{2^{c}}$ for some $c\ge1$,
  \item $D_{2^{c}}, Q_{2^{c}}$, or
$\SD_{2^{c}}$ for some $c\ge3$.
\end{itemize}

\end{fac}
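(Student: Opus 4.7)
The plan is to reduce this structural fact to the classification of metacyclic $2$-groups. The first nontrivial observation is that $R=\Omega_{w}(G)$ is actually a subgroup (in general for $2$-groups, $\Omega_{i}$ is only a set; e.g., in $D_{8}$ one has $|\Omega_{1}|=6$). For a metacyclic $2$-group, I would use the standard Hempel presentation $G = \langle a,b \mid a^{2^{m}} = \id,\ b^{2^{n}} = a^{2^{\lambda}},\ bab^{-1} = a^{k}\rangle$ and compute $|\Omega_{i}(G)|$ by enumerating the normal-form elements $a^{r}b^{s}$ whose $2^{i}$-th power is trivial, using Hall's collection formula. The $2$-adic valuations of $1+k+k^{2}+\cdots+k^{2^{i}-1}$ appear naturally in this calculation.

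The maximality of $w$ then identifies the largest exponent at which this count attains the balanced value $2^{2i}$. Direct inspection of the resulting formulas shows that $\Omega_{w}$ is closed under multiplication and inversion, hence a normal subgroup of $G$, and its order is $|R|=2^{2w}$ by the defining equality.

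The main classification step is for $G/R$. Since $G/R$ is again metacyclic (the quotient of a metacyclic group is metacyclic) and $2$-generated of $2$-power order, its structure is constrained by the maximality of $w$: the balanced-$\Omega$ behavior cannot propagate further. Combined with the classification of metacyclic $2$-groups, this forces $G/R$ to be either trivial, cyclic of some order $2^{c}$, or of maximal class. The metacyclic $2$-groups of maximal class are precisely $D_{2^{c}}$, $Q_{2^{c}}$, and $\SD_{2^{c}}$ for $c\ge 3$, which completes the stated list.

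The main obstacle is the combinatorial computation of $|\Omega_{i}(G)|$ in the non-split ($\lambda<m$) and non-abelian ($k\not\equiv 1\pmod{2^{m}}$) cases. The $2$-adic valuations involved are delicate, and carefully handling the exceptional behavior at $p=2$ — in particular the split between $v_{2}(k-1)$ and $v_{2}(k+1)$ — is the critical step from which the dihedral/quaternion/semidihedral trichotomy in $G/R$ emerges.
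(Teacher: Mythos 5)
The paper itself gives no proof of this statement: it is quoted as a Fact from Berkovich (2011), so your sketch must stand on its own, and as it stands it has two genuine gaps exactly at the points that carry the content of the theorem. First, you rightly observe that $\Omega_{w}(G)$ is a priori only a subset (your $D_{8}$ example is apt), but your justification that it is a subgroup --- ``direct inspection of the resulting formulas shows that $\Omega_{w}$ is closed under multiplication'' --- presupposes a completed enumeration of the solutions of $(a^{r}b^{s})^{2^{i}}=\id$ in the Hempel normal form which you never carry out; moreover, knowing the cardinality $|\Omega_{i}(G)|$ does not by itself give closure, so an additional structural argument is needed. (Once closure is established, normality is automatic because $\Omega_{w}(G)$ is a characteristic subset, and $|R|=2^{2w}$ is just the defining equality; those parts are unproblematic.)

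Second, and more seriously, the classification of $G/R$ is asserted rather than derived. The sentence ``the balanced-$\Omega$ behavior cannot propagate further \ldots this forces $G/R$ to be either trivial, cyclic, or of maximal class'' is precisely the statement to be proved, and no mechanism is offered: you must explain why $G/R$ cannot be, for instance, abelian non-cyclic, a modular maximal-cyclic group $M_{2^{c}}$, or some other metacyclic group $\mathbb{Z}_{2^{a}}\rtimes\mathbb{Z}_{2^{b}}$. Note also that maximality of $w$ only says $|\Omega_{w+1}(G)|\neq 2^{2(w+1)}$, and this can fail from above as well as from below: dihedral and semidihedral quotients, which genuinely occur in the conclusion, have more than four solutions of $x^{2}=\id$, so any argument in the spirit of ``$G/R$ has a unique involution, hence is cyclic or quaternion'' is not available. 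Bridging from the maximality of $w$ to the trichotomy (trivial/cyclic/maximal class) requires a careful analysis of how cosets of $R$ interact with the cyclic normal subgroup of $G$ and with the $2$-adic invariants $v_{2}(k\pm 1)$; this is the substance of Berkovich's proof and is exactly what is missing. Your final step --- that the maximal-class $2$-groups are $D_{2^{c}}$, $Q_{2^{c}}$, $\SD_{2^{c}}$ for $c\ge 3$ --- is correct, but the reduction to maximal class is the gap.
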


\begin{fac}[\cite{Berkovich2011}]\label{fac:metacyclic-2-subgroup-numbers}
Let $G$ be a finite metacyclic $2$-group of order $2^{\ell}$.
For each integer $t$ with $0\le t\le \ell$, the value of $a_{2^{t}}(G)$
is determined by the structure of $G$ as follows.
\begin{enumerate}
  \item[\textup{(1)}]
  If $G\simeq \mathbb{Z}_{2^{\ell}}$, then $a_{2^{t}}(G)=1$ for every $0\le t\le \ell$.

\item[\textup{(2)}]
Assume that $w=0$ and that $G \simeq D_{2^{\ell}}, Q_{2^{\ell}},$ or $\SD_{2^{\ell}}$.
Then $a_{1}(G)=a_{2^{\ell}}(G)=1$, and for every $2\le t<\ell$ we have
\[
  a_{2^{t}}(G)=2^{\,\ell-t}+1.
\]
Moreover, for $t=1$ we have
\[a_{2}(D_{2^{\ell}})=2^{\,\ell-1}+1,\qquad a_{2}(Q_{2^{\ell}})=1,\qquad a_{2}(\SD_{2^{\ell}})=2^{\ell-2}+1  \]

  \item[\textup{(3)}]
  Assume that $w>0$ and $R=G$.
  Then $|G|=2^{2w}$, and for every $0\le t\le 2w$,
  \[
    a_{2^{t}}(G)
    =
    \begin{cases}
      2^{t+1}-1         & \text{if } 0\le t\le w,\\
      2^{\,2w-t+1}-1    & \text{if } w<t\le 2w.
    \end{cases}
  \]

  \item[\textup{(4)}]
  Assume that $w>0$, $R\subsetneq G$, and $G/R\simeq \mathbb{Z}_{2^{c}}$
  for some $c\ge1$.
  Then $|G|=2^{2w+c}$, and for every $0\le t\le 2w+c$,
  \[
    a_{2^{t}}(G)
    =
    \begin{cases}
      2^{t+1}-1        & \text{if } 0\le t\le w,\\
      2^{\,w+1}-1      & \text{if } w<t<w+c,\\
      2^{\,2w+c-t+1}-1 & \text{if } w+c\le t\le 2w+c.
    \end{cases}
  \]

  \item[\textup{(5)}]
  Assume that $w>0$, $R\subsetneq G$, and
  $G/R\simeq D_{2^{c}}, Q_{2^{c}}$ or $\SD_{2^{c}}$ for some $c\ge3$.
  Then $|G|=2^{2w+c}$.
  The numbers $a_{2^{t}}(G)$ for $0\le t\le 2w+c$ are given by explicit formulas in
  \cite{Berkovich2011}.
  We omit the full list of formulas here because it is lengthy.
  \begin{itemize}
    \item If $G/R\simeq Q_{2^{c}}$, see \cite{Berkovich2011}, Theorem~3.9 and Supplements~1--2
    \item If $G/R\simeq D_{2^{c}}$, see \cite{Berkovich2011}, Theorem~3.11 and Supplements~1--3
    \item If $G/R\simeq \SD_{2^{c}}$, see \cite{Berkovich2011}, Theorem~3.12 and Supplements~1--2.
  \end{itemize}

  \end{enumerate}
\end{fac}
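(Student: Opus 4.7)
The plan is to organize the proof as a case analysis dictated by Fact~\ref{fac:metacyclic-2-RG-quotient}: the characteristic subgroup $R = \Omega_{w}(G)$ captures the ``small-exponent'' information of $G$, and the isomorphism type of $G/R$ determines the shape of the formula. The central structural input is the claim that whenever $w \geq 1$, the subgroup $R$ is isomorphic to $\mathbb{Z}_{2^{w}} \times \mathbb{Z}_{2^{w}}$. This follows because $\exp(R) \leq 2^{w}$ by definition of $\Omega_{w}$, $|R| = 2^{2w}$ by the maximality defining $w$, and a subgroup of a metacyclic $2$-group is itself metacyclic and so at most $2$-generated; the unique $2$-generated abelian $2$-group of order $2^{2w}$ and exponent $2^{w}$ is $\mathbb{Z}_{2^{w}} \times \mathbb{Z}_{2^{w}}$ (abelianity being forced by the exponent-order constraints once $R$ is known to be metacyclic). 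Once this is in place, the classical subgroup count for $\mathbb{Z}_{2^{w}} \times \mathbb{Z}_{2^{w}}$ yields $2^{t+1} - 1$ subgroups of order $2^{t}$ for $0 \leq t \leq w$ and $2^{2w - t + 1} - 1$ for $w < t \leq 2w$ by Hall duality. This directly handles case (3), where $R = G$.

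Cases (1) and (2) would be verified directly. Case (1), the cyclic case, is immediate from the linear subgroup lattice of $\mathbb{Z}_{2^{\ell}}$. For case (2), each of $D_{2^{\ell}}$, $Q_{2^{\ell}}$, $\SD_{2^{\ell}}$ admits a standard presentation with a distinguished maximal cyclic subgroup of index $2$; I would count subgroups by their intersection with this maximal cyclic together with the conjugacy pattern of reflections, pseudo-reflections, or order-$4$ elements outside that cyclic part, as appropriate. The $t = 1$ discrepancies $a_{2}(D_{2^{\ell}}) = 2^{\ell-1} + 1$, $a_{2}(Q_{2^{\ell}}) = 1$, $a_{2}(\SD_{2^{\ell}}) = 2^{\ell-2} + 1$ then read off as exactly the number of involutions in each group.

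For cases (4) and (5), I would partition $S(G)$ according to the relationship of a subgroup $H$ with $R$ into three classes: (i) $H \leq R$, counted by case (3) applied inside $R$; (ii) $R \leq H$, counted by the correspondence $H \leftrightarrow H/R$ combined with the classification of subgroups of $G/R$ coming from cases (1)--(2) applied to the quotient; and (iii) neither containment holds, so $H$ is parametrised by the pair $(H \cap R,\, HR/R)$ together with a choice of transversal, subject to $|H| = |H \cap R| \cdot |HR/R|$. In case (4), the lattice of $G/R \simeq \mathbb{Z}_{2^{c}}$ is a chain, and summing the compatible contributions shows that each intermediate order $w < t < w + c$ receives the constant contribution $2^{w+1} - 1$, producing the plateau in the middle of the formula.

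The main obstacle is case (5), where $G/R$ is $D_{2^{c}}$, $Q_{2^{c}}$, or $\SD_{2^{c}}$. Here the lattice of $G/R$ is no longer a chain, and the conjugation action of $G/R$ on $R \simeq \mathbb{Z}_{2^{w}} \times \mathbb{Z}_{2^{w}}$ is nontrivial, so the type (iii) contributions split into further subcases depending on whether lifts of dihedral, quaternion, or semidihedral subgroups of $G/R$ hit involutions of $R$ along specific cosets and on how those involutions interact with the two $\mathbb{Z}_{2^{w}}$-factors of $R$. This is precisely why \cite{Berkovich2011} records the counts via several distinct formulas with multiple Supplements rather than a single closed expression. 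A complete proof would follow that case-by-case bookkeeping, enumerating the type (iii) transversal data for each of the three possible quotient types separately.
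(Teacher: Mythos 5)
This statement is labelled as a \textit{Fact} and is quoted from \cite{Berkovich2011}; the paper supplies no proof of it at all, so there is no in-paper argument to compare yours against. Judged on its own terms, your sketch has a genuine gap at its foundation.

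Your central structural lemma --- that $R=\Omega_{w}(G)\simeq \mathbb{Z}_{2^{w}}\times\mathbb{Z}_{2^{w}}$ whenever $w\ge 1$ --- is false, and the step where you assert that ``abelianity is forced by the exponent-order constraints once $R$ is known to be metacyclic'' is exactly where it breaks. A metacyclic $2$-group of order $2^{2w}$ and exponent $2^{w}$ need not be abelian. Take
$G=\langle a,b\mid a^{4}=b^{4}=\id,\ bab^{-1}=a^{-1}\rangle$,
of order $16$: one checks that $\Omega_{1}(G)=\{\id,a^{2},b^{2},a^{2}b^{2}\}$ has order $4=2^{2}$ and $\Omega_{2}(G)=G$ has order $16=2^{4}$, so $w=2$ and $R=G$, yet $G$ is nonabelian. (The subgroup counts $1,3,7,3,1$ do happen to agree with the formula in case (3), so the \emph{statement} survives, but your derivation of it --- invoking the classical count for the homocyclic abelian group --- does not apply, and the Hall-duality step you lean on is specific to abelian groups.) Since cases (4) and (5) are built on top of this description of $R$, the gap propagates to them as well.

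Beyond that, your treatment of the type (iii) subgroups (those with $H\cap R\ne H$ and $R\not\le H$) is only a parametrisation, not a count: the number of subgroups projecting onto a given $L\le G/R$ with given intersection $W=H\cap R$ is governed by $1$-cocycles of $L$ with values in $R/W$ (compare the bijection \eqref{eq:semid-Z1-1} used elsewhere in this paper), and computing these is the actual content of Berkovich's Theorems 3.9--3.12. Deferring case (5) to the source is reasonable for a quoted Fact, but as a proof the plateau $2^{w+1}-1$ in case (4) and all of case (5) remain unestablished. To repair the argument you would need either Berkovich's actual structural analysis of $R$ (which does not reduce to the homocyclic abelian case) or a cocycle count that works for the correct, possibly nonabelian, $R$.
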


Fact~\ref{fac:metacyclic-2-subgroup-numbers} determines the values of $a_{2^{t}}(G)$.
However, counting pairs of non-isomorphic groups with coincident group zeta functions
requires one to keep track of the data $w$, $R$, and the quotient group $G/R$.
As a result, the number of such pairs is hard to determine even when $|G|$ is fixed.
The difficulty becomes more pronounced as the order grows.

In contrast, for metacyclic $p$-groups of split type of the form $G(p,m,n,k)$,
zeta equality admits a uniform characterization.
When $p=2$, Theorem~\ref{thm:metamaintheorem} gives a criterion for zeta equality in this family
in terms of $m$, $n$, and the $2$-adic valuations $v_{2}(k+1)$.
For odd primes $p$, the same method yields an analogous criterion.
This criterion is parallel to Corollary~\ref{cor:pge3-all-similar}.
Therefore, we focus primarily on the case $p=2$.

We will frequently use Lemma~\ref{lem:LTElemma} in our computations.
\begin{lem}[\cite{AndreescuReflections2011}]\label{lem:LTElemma}
Let $p$ be a prime, $x,y\in\mathbb{Z}$, and $n\ge 1$.

If $p$ is odd, $p\mid(x-y)$, $p\nmid x$, and $p\nmid y$, then
\begin{equation}\label{eq:LTE-podd}
  v_p(x^n-y^n)=v_p(x-y)+v_p(n).
\end{equation}

If $p=2$ and $x\equiv y\equiv 1\pmod 2$, then
\begin{subequations}\label{eq:LTE-p2}
\begin{numcases}{v_2(x^n-y^n)=}
  v_2(x-y), & if $n$ is odd, \label{eq:LTE-peven-nodd}\\
  v_2(x-y)+v_2(x+y)+v_2(n)-1, & if $n$ is even. \label{eq:LTE-peven-neven}
\end{numcases}
\end{subequations}
\end{lem}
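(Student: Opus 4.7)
The plan is to establish the odd-prime case first and then treat $p=2$ by a modification that tracks the extra $v_2(x+y)$ contribution. The common engine is the factorization
\[
x^n - y^n = (x-y)\sum_{i=0}^{n-1} x^{n-1-i} y^i,
\]
combined with induction on $v_p(n)$.

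For odd $p$, the heart of the argument is the base case $n=p$. Writing $x = y + d$ with $v_p(d) = v_p(x-y) \ge 1$ and expanding each $x^{p-1-i} = (y+d)^{p-1-i}$ modulo $p^2$ (where $d^2 \equiv 0$), the right-hand sum collapses to $p\,y^{p-1} + d\,y^{p-2}\,\tfrac{p(p-1)}{2} \pmod{p^{2}}$. Since $p$ is odd, $\tfrac{p(p-1)}{2}$ is divisible by $p$, so this sum has $v_p$ exactly $1$, giving $v_p(x^p - y^p) = v_p(x - y) + 1$. Iterating lifts this to $n = p^k$, and for general $n = p^{k}m$ with $\gcd(m,p) = 1$ the outer sum $\sum_{i=0}^{m-1}(x^{p^k})^{m-1-i}(y^{p^k})^i$ is a $p$-unit, since it is $\equiv m\,(y^{p^k})^{m-1} \pmod p$; hence the valuation is unchanged.

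For $p = 2$ and odd $n$, the same factorization argument gives \eqref{eq:LTE-peven-nodd} directly: the right-hand sum consists of an odd number of odd terms, hence is odd. For even $n$, I would write $n = 2^{k}m$ with $m$ odd and $k \ge 1$, reduce via the odd case to $v_2(X^{2^k} - Y^{2^k})$ with $X = x^m$, $Y = y^m$, and telescope
\[
X^{2^k} - Y^{2^k} = (X - Y)(X + Y)\prod_{j=1}^{k-1}\bigl(X^{2^{j}} + Y^{2^{j}}\bigr).
\]
Then $v_2(X-Y) = v_2(x-y)$ and $v_2(X+Y) = v_2(x+y)$ (the latter from the odd-$n$ case after substituting $y \mapsto -y$, which flips the sign since $m$ is odd), while each factor $X^{2^{j}} + Y^{2^{j}}$ with $j \ge 1$ has $v_2 = 1$ because an odd integer squared is $\equiv 1 \pmod 8$. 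Summing the contributions yields $v_2(x-y) + v_2(x+y) + (k-1) = v_2(x-y) + v_2(x+y) + v_2(n) - 1$.

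The main obstacle, and the reason $p=2$ is exceptional, is the bookkeeping in this last step: one must isolate the \emph{single} contribution $v_2(x+y)$ coming from the factor $X + Y$ from the uniform contribution $1$ coming from each of the $k-1$ remaining square-sum factors. Structurally, the anomaly at $p = 2$ traces back to the failure of $p \mid \tfrac{p(p-1)}{2}$ at $p = 2$, which is precisely where the collapse in the odd-$p$ base case breaks down and forces the extra $v_2(x+y)$ term to appear.
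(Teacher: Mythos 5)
The paper does not prove this lemma at all: it is imported as a cited result from the Andreescu reference, so there is no internal argument to compare against. Your proposal is a correct, self-contained proof and is in fact the standard one. The odd-$p$ base case is right: expanding $x=y+d$ modulo $d^2$ (hence modulo $p^2$) gives $\sum_{i=0}^{p-1}x^{p-1-i}y^i\equiv p\,y^{p-1}+d\,y^{p-2}\tfrac{p(p-1)}{2}\equiv p\,y^{p-1}\pmod{p^2}$, so the cofactor has $v_p$ exactly $1$, and the iteration to $n=p^k$ is legitimate because the hypotheses $p\mid x^{p^j}-y^{p^j}$, $p\nmid x^{p^j}y^{p^j}$ persist; the unit cofactor for the prime-to-$p$ part is also handled correctly. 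The $p=2$ analysis is likewise sound: the odd-$n$ case from the parity of the $n$-term sum, $v_2(x^m+y^m)=v_2(x+y)$ via the substitution $y\mapsto -y$ with $m$ odd, and the telescoping $X^{2^k}-Y^{2^k}=(X-Y)(X+Y)\prod_{j=1}^{k-1}(X^{2^j}+Y^{2^j})$ with each square-sum factor $\equiv 2\pmod 8$, which yields exactly $v_2(x-y)+v_2(x+y)+v_2(n)-1$. The only cosmetic remark is the degenerate case $x=\pm y$, where both sides are handled by the paper's convention $v_p(0)=\infty$; it costs one sentence to note this and does not affect the argument.
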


\section{Criteria for zeta equality among the groups $G(p,m,n,k)$}

Any semidirect product $G:=V\rtimes_{\phi}U$ fits into a split short exact sequence
\[
  1\to V\to G\overset{\pi}{\to}U\to 1.
\]
For subgroups $W\subset V$ and $L\subset U$, we define
\[
  S_{W,L}(G):=\{\,H\le G\mid H\cap V=W,\ \pi(H)=L\,\}.
\]
For every subgroup $H\subset G$, set $W:=H\cap V$ and $L:=\pi(H)$.
Then there is a short exact sequence
\[
  1\to W\to H\overset{\pi\mid_{H}}{\to}L\to 1.
\]

Let $W\subset V$ and $L\subset U$.
Then $W\lhd \pi^{-1}(L)$ if and only if $W\lhd V$ and
$\phi(\ell)(W)=W$ for every $\ell\in L$.
Assume that $W\lhd \pi^{-1}(L)$.
Set $\tilde{V}:=V/W$ and $\tilde{G}:=\pi^{-1}(L)/W$.
Then factoring out $W$ yields a short exact sequence
\[
  1\to \tilde{V}\to \tilde{G}\overset{\tilde{\pi}}{\to}L\to 1.
\]
The group $L$ acts on $\tilde{V}$ via conjugation in $\tilde{G}$.
According to \cite{Robinson1996}, there is a bijection
\[
  S_{\{\id\},L}(\tilde{G})
  :=\{\,\tilde{H}\le \tilde{G}\mid \tilde{H}\cap \tilde{V}=\{\id\},\
      \tilde{\pi}(\tilde{H})=L\,\}
  \longleftrightarrow Z^{1}(L,\tilde{V}).
\]
Moreover, the map $H\mapsto H/W$ induces a bijection
$S_{W,L}(G)\longleftrightarrow S_{\{\id\},L}(\tilde{G})$.

We now apply this to our situation.
From now on, we write $G:=G(p,m,n,k)$.
For every subgroup $A\subset H\simeq \mathbb{Z}_{p^{m}}$, the subgroup $A$ is
characteristic in $H$.
Since $H\lhd G$, we have $A\lhd G$.
Therefore, for every subgroup $B\subset K$, the subgroup $A$ is $B$-invariant.
We obtain a bijection
\begin{equation}\label{eq:semid-Z1-1}
S_{A,B}(G):=\Bigl\{\,T\le G\ \Bigm|\ T\cap H=A,\ \pi(T)=B\,\Bigr\}
\longleftrightarrow Z^{1}\bigl(B,\,H/A\bigr).
\end{equation}
In particular, we have the disjoint union $S(G)=\bigsqcup_{A\subset H,\, B\subset K} S_{A,B}(G)$.
For every subgroup $T\subset G$, the pair $(A,B)$ is uniquely determined by
$A=T\cap H$ and $B=\pi(T)$.
Combining this with \eqref{eq:semid-Z1-1}, we obtain
\[
  \#S(G)=\sum_{A\le H,\, B\le K} \bigl|Z^{1}(B,H/A)\bigr|.
\]

For each $0\le i\le m$ and $0\le j\le n$, let $A_i\le H$ and $B_j\le K$ be the unique subgroups
with $|A_{i}|=p^{i}$ and $|B_{j}|=p^{j}$.
Restricting to subgroups of order $p^{t}$, we obtain
\begin{equation}\label{eq:apt-Z1-1}
a_{p^{t}}(G)=\sum_{\substack{0\le i\le m,\ 0\le j\le n\\ i+j=t}}
\Bigl|\,Z^{1}\bigl(B_{j},\,H/A_{i}\bigr)\Bigr|.
\end{equation}
Putting $j=t-i$ and using $0\le j\le n$, we obtain $t-n\le i\le t$.
Hence $\max\{0,\,t-n\}\le i\le \min\{t,\,m\}$.
Thus, we can rewrite \eqref{eq:apt-Z1-1} as
\begin{equation}\label{eq:apt-Z1-2}
a_{p^{t}}(G)
=
\sum_{i=\max\{0,\,t-n\}}^{\min\{t,\,m\}}
\Bigl|\,Z^{1}\bigl(B_{\,t-i},\,H/A_{i}\bigr)\Bigr|.
\end{equation}
For each $0\le i\le m$ and $0\le j\le n$ with $t=i+j$, we define
\[
A_i:=\langle a^{p^{\,m-i}}\rangle \simeq \mathbb{Z}_{p^{i}} \subset H,\quad
B_j:=\langle b^{p^{\,n-j}}\rangle \simeq \mathbb{Z}_{p^{j}} \subset K,\quad
M_i:=H/A_i\ \simeq\ \mathbb{Z}_{p^{\,m-i}}.
\]

\begin{fac}[c.f.\cite{Robinson1996}, Chap.~3]\label{fac:Z1-kernel-Nij}
Identify $M_i$ with $\mathbb{Z}/p^{\,m-i}\mathbb{Z}$, and write it additively.
Let $T_{i,j}\in\End(M_i)$ be the endomorphism induced by the action of $B_j$ on $M_i$
via the restriction of $\phi$.
Then
\begin{equation}\label{eq:Tijformula}
T_{i,j}(x)=u_{i,j}\,x\quad(x\in\mathbb{Z}/p^{\,m-i}\mathbb{Z}),\quad
u_{i,j}\equiv k^{\,p^{\,n-j}}\pmod{p^{\,m-i}}.
\end{equation}
Define
\begin{equation}\label{eq:Nijdefi}
N_{i,j}\ :=\ 1+T_{i,j}+T_{i,j}^{\,2}+\cdots+T_{i,j}^{\,p^{\,j}-1},
\end{equation}
where $1=\id_{M_{i}}$.
With this definition, there is a bijection
\begin{equation}\label{eq:Z1-as-kernel}
Z^{1}(B_j,M_i)\ \longleftrightarrow\ \Ker(N_{i,j})
=\{\,x\in M_i\mid N_{i,j}(x)=0\,\}.
\end{equation}
\end{fac}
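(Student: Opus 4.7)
The plan is to establish the two assertions of the fact separately: the formula for $T_{i,j}$, and the bijection $Z^1(B_j, M_i) \leftrightarrow \Ker(N_{i,j})$. Both amount to unwinding the definitions and exploiting the cyclicity of $B_j$.

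First I would compute $T_{i,j}$. From $bab^{-1} = a^k$, an easy induction on $s$ gives $\phi(b^s)(a) = a^{k^s}$ for every $s \ge 0$. Since $A_i$ is characteristic in $H$, it is stable under every $\phi(b^s)$, so the action of $K$ descends to $M_i = H/A_i$. Choosing $a + A_i$ as an additive generator of $M_i \simeq \mathbb{Z}/p^{m-i}\mathbb{Z}$, the restricted action of the generator $\tau := b^{p^{n-j}}$ of $B_j$ is multiplication by $k^{p^{n-j}}$ modulo $p^{m-i}$. This gives $T_{i,j}(x) = u_{i,j}\,x$ with $u_{i,j} \equiv k^{p^{n-j}} \pmod{p^{m-i}}$, as claimed.

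Next I would establish the bijection. A $1$-cocycle $f : B_j \to M_i$ is a map satisfying $f(xy) = f(x) + x \cdot f(y)$ in additive notation, where the dot denotes the action via $T_{i,j}$. Since $B_j = \langle \tau \rangle$ is cyclic, $f$ is determined by $x_0 := f(\tau)$, and an induction on $r$ using the cocycle identity yields
$$f(\tau^r) = \bigl( 1 + T_{i,j} + T_{i,j}^2 + \cdots + T_{i,j}^{r-1} \bigr)(x_0)$$
for every $0 \le r \le p^j$. The only consistency requirement is $f(\tau^{p^j}) = f(\id) = 0$, which is precisely $N_{i,j}(x_0) = 0$. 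Conversely, any $x_0 \in \Ker(N_{i,j})$ defines a well-defined map on $B_j$ by this formula, and the cocycle identity follows from $T_{i,j}^{r} \circ T_{i,j}^{s} = T_{i,j}^{r+s}$. The map $f \mapsto f(\tau)$ is therefore the desired bijection.

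The main obstacle is essentially bookkeeping: one must check that the inductive formula for $f(\tau^r)$ closes up consistently at $r = p^j$, and that on a finite cyclic group the entire family of cocycle conditions really does reduce to the single equation $N_{i,j}(x_0) = 0$. Once the additive and multiplicative notations are correctly reconciled, the rest is routine and matches the standard presentation in Robinson's textbook.
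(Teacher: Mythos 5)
Your argument is correct: the paper states this as a Fact cited from Robinson without giving a proof, and your computation of $T_{i,j}$ together with the standard observation that a $1$-cocycle on the cyclic group $B_j=\langle b^{p^{n-j}}\rangle$ is determined by its value $x_0$ at the generator, subject only to the single closing condition $N_{i,j}(x_0)=0$ (using $T_{i,j}^{p^j}=\id_{M_i}$ for the wrap-around check), is exactly the textbook argument being referenced. No gaps beyond the bookkeeping you already flag, so nothing further is needed.
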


\begin{cor}\label{cor:aptG-KerNij-eq}
Combining \eqref{eq:apt-Z1-2}
with \eqref{eq:Z1-as-kernel}, we obtain
\[
a_{p^{t}}(G)
=
\sum_{i=\max\{0,\,t-n\}}^{\min\{t,\,m\}}
\Bigl|\Ker\bigl(N_{i,\,t-i}\bigr)\Bigr|.\]
\end{cor}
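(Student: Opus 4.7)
The statement is essentially a direct concatenation of the two preceding results, so the plan is to combine them termwise rather than redo any analysis. First I would take equation~\eqref{eq:apt-Z1-2} as the starting point, which already expresses
\[
a_{p^{t}}(G)=\sum_{i=\max\{0,\,t-n\}}^{\min\{t,\,m\}}\bigl|Z^{1}(B_{t-i},\,H/A_{i})\bigr|,
\]
with the correct index range coming from the constraints $0\le i\le m$, $0\le j\le n$, and $j=t-i$. In the notation fixed just before Fact~\ref{fac:Z1-kernel-Nij}, we have $H/A_{i}=M_{i}$, so each summand already has the form $|Z^{1}(B_{j},M_{i})|$ with $j=t-i$.

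Second, I would apply the bijection \eqref{eq:Z1-as-kernel} of Fact~\ref{fac:Z1-kernel-Nij} to each summand. This bijection identifies $Z^{1}(B_{j},M_{i})$ with $\Ker(N_{i,j})$ as sets, where $N_{i,j}$ is the operator defined in \eqref{eq:Nijdefi} from the endomorphism $T_{i,j}$ in \eqref{eq:Tijformula}. Taking cardinalities and substituting $j=t-i$ in every term gives the claimed formula.

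The only substantive point to check, and the only place where anything could go wrong, is the compatibility of the $B_{j}$-action on $M_{i}$ used in the two ingredients: the action appearing in the general semidirect-product bijection \eqref{eq:semid-Z1-1} (which is the restriction of $\phi$ composed with the quotient $H\twoheadrightarrow H/A_{i}$) must coincide with the action used to define $T_{i,j}$ in Fact~\ref{fac:Z1-kernel-Nij}. Since $A_{i}$ is characteristic in $H$ and $H\lhd G$, the induced action of $B_{j}$ on $M_{i}$ is well defined, and by construction it is given by multiplication by $k^{p^{n-j}}\bmod p^{m-i}$, which matches \eqref{eq:Tijformula}. Thus the same $N_{i,j}$ controls both sides, and the corollary follows with no further work.
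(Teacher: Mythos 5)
Your proof is correct and is exactly the argument the paper intends: substitute the cardinality identity $|Z^{1}(B_{j},M_{i})|=|\Ker(N_{i,j})|$ from \eqref{eq:Z1-as-kernel} termwise into \eqref{eq:apt-Z1-2} with $j=t-i$. The compatibility check of the $B_{j}$-action on $M_{i}$ is a sensible verification and agrees with the paper's setup, so nothing is missing.
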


\begin{prop}\label{prop:KerNij-podd-2-case}
For $N_{i,j}$ defined in \eqref{eq:Nijdefi}, the order $|\Ker(N_{i,j})|$ is given as follows.

Assume that $p$ is odd. Then
\[
  \bigl|\Ker(N_{i,j})\bigr|
  \ =\ p^{\,\min\{\,m-i,\ j\,\}}.
\]

Assume that $p=2$. Then
\[
  \bigl|\Ker(N_{i,j})\bigr|
  \ =\
  \begin{cases}
    2^{\,\min\{\,m-i,\ j\,\}} &
      \textup{if } u_{i,j}\equiv 1\pmod{2^{\,m-i}},\\
    2^{\,\min\{\,m-i,\ v_2(u_{i,j}+1)+j-1\,\}} &
      \textup{if } u_{i,j}\not\equiv 1\pmod{2^{\,m-i}}.
  \end{cases}
\]
\end{prop}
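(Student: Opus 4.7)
The plan is to observe that $N_{i,j}$ acts on the cyclic module $M_i\simeq\mathbb{Z}/p^{m-i}\mathbb{Z}$ by multiplication by the scalar
\[
  c_{i,j}\ :=\ 1+u_{i,j}+u_{i,j}^{2}+\cdots+u_{i,j}^{\,p^{j}-1}\pmod{p^{m-i}},
\]
and to invoke the standard fact that for any scalar $c$ acting on $\mathbb{Z}/p^{N}\mathbb{Z}$ one has $|\Ker(c\cdot)|=p^{\min\{N,\,v_{p}(c)\}}$. The whole proposition then reduces to computing $v_{p}(c_{i,j})$, and I would split along whether $u_{i,j}\equiv 1\pmod{p^{m-i}}$.

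In the first case, $c_{i,j}\equiv p^{j}\pmod{p^{m-i}}$ immediately gives $|\Ker(N_{i,j})|=p^{\min\{m-i,\,j\}}$; this covers the odd-$p$ statement and the $u_{i,j}\equiv 1$ branch for $p=2$. In the remaining case, I would use the geometric-series identity
\[
  c_{i,j}\ =\ \frac{u_{i,j}^{\,p^{j}}-1}{u_{i,j}-1}
\]
(with an integer lift $u_{i,j}\neq 1$) and apply Lemma~\ref{lem:LTElemma}. For odd $p$, the validity condition $k^{p^{n}}\equiv 1\pmod{p^{m}}$ combined with the decomposition $(\mathbb{Z}/p^{m}\mathbb{Z})^{\times}\simeq\mathbb{Z}/(p-1)\mathbb{Z}\times\mathbb{Z}/p^{m-1}\mathbb{Z}$ forces $k\equiv 1\pmod{p}$ and hence $u_{i,j}\equiv 1\pmod{p}$; then \eqref{eq:LTE-podd} yields $v_{p}(u_{i,j}^{p^{j}}-1)=v_{p}(u_{i,j}-1)+j$, so $v_{p}(c_{i,j})=j$ and the formula again collapses to $p^{\min\{m-i,\,j\}}$. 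For $p=2$ with $j\ge 1$, the even-exponent LTE \eqref{eq:LTE-peven-neven} gives
\[
  v_{2}(u_{i,j}^{2^{j}}-1)\ =\ v_{2}(u_{i,j}-1)+v_{2}(u_{i,j}+1)+j-1,
\]
so $v_{2}(c_{i,j})=v_{2}(u_{i,j}+1)+j-1$, matching the stated formula; the $j=0$ sub-case is not covered by \eqref{eq:LTE-peven-neven}, but there $u_{i,0}=k^{2^{n}}\equiv 1\pmod{2^{m}}$ forces us into the first branch with kernel size $1$.

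The main obstacle, as usual with LTE-based arguments, is verifying its hypotheses in each regime: the divisibility $p\mid u_{i,j}-1$ required in \eqref{eq:LTE-podd} for odd $p$, and the evenness of the exponent $p^{j}$ in \eqref{eq:LTE-peven-neven} for $p=2$. The $p=2$ branch is where the proof genuinely differs from the odd case, since the extra $v_{2}(u_{i,j}+1)$ term in LTE produces the non-uniform behavior of the formula and is precisely what drives the eventual case distinction in Theorem~\ref{thm:metamaintheorem}.
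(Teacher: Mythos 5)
Your proposal is correct and follows essentially the same route as the paper's proof: identify $N_{i,j}$ as multiplication by the geometric sum $S_{i,j}=\sum_{\nu=0}^{p^{j}-1}u_{i,j}^{\nu}$, reduce the kernel size to $p^{\min\{m-i,\,v_p(S_{i,j})\}}$, and evaluate $v_p(S_{i,j})$ via $u_{i,j}^{p^{j}}-1=(u_{i,j}-1)S_{i,j}$ together with Lemma~\ref{lem:LTElemma}, using the validity condition to get $k\equiv 1\pmod p$ for odd $p$ and $k$ odd (hence $u_{i,j}$ odd, as LTE requires) together with $j\ge 1$ for $p=2$. The only cosmetic difference is that you split on $u_{i,j}\equiv 1\pmod{p^{m-i}}$ in both cases, whereas the paper splits the odd-$p$ case on $u_{i,j}=1$ versus $u_{i,j}\neq 1$; the substance is identical.
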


\begin{proof}
Put $r:=m-i$.
Under the identification $M_i\simeq\mathbb{Z}/p^{r}\mathbb{Z}$, the formulas
\eqref{eq:Tijformula} and \eqref{eq:Nijdefi} yield
\[
  T_{i,j}(x)=u_{i,j}x,
  \qquad
  N_{i,j}(x)=\sum_{\nu=0}^{p^{j}-1}u_{i,j}^{\nu}x = S_{i,j}x,
\]
where $S_{i,j}:=\sum_{\nu=0}^{p^{j}-1}u_{i,j}^{\nu}\in\mathbb{Z}$. We have
\[
  \Ker(N_{i,j})
  =\{x\in\mathbb{Z}/p^{r}\mathbb{Z}\mid S_{i,j}x\equiv 0\pmod{p^{r}}\}.
\]
Writing $S_{i,j}=p^{\alpha}w$ with $\alpha\ge 0$ and $\gcd(w,p)=1$, the condition
$S_{i,j}x\equiv 0\pmod{p^{r}}$ holds if and only if $p^{r-\alpha}\mid x$.
Hence
\begin{equation}\label{KerNijeqgene}
  |\Ker(N_{i,j})|
  = p^{\,\min\{\,r,\ v_p(S_{i,j})\,\}}.
\end{equation}

Assume that $p$ is odd.
From \eqref{eq:well-definedness}, we have $k^{p^{n}}\equiv 1\pmod p$.
Since $(\mathbb{Z}/p\mathbb{Z})^{\times}$ has order $p-1$ and
$\gcd(p^{n},p-1)=1$, it follows that $k\equiv 1\pmod p$.
Together with \eqref{eq:Tijformula}, this gives $u_{i,j}\equiv 1\pmod p$.
If $u_{i,j}=1$, then $S_{i,j}=p^{j}$ and $v_p(S_{i,j})=j$.
If $u_{i,j}\neq 1$, applying Lemma~\ref{lem:LTElemma}, \eqref{eq:LTE-podd}
to $x=u_{i,j}$, $y=1$, and $n=p^{j}$ yields
\[
  v_p(u_{i,j}^{p^{j}}-1)=v_p(u_{i,j}-1)+v_p(p^{j})
  =v_p(u_{i,j}-1)+j.
\]
Since $u_{i,j}^{p^{j}}-1=(u_{i,j}-1)S_{i,j}$, we obtain $v_p(S_{i,j})=j$.
Therefore \eqref{KerNijeqgene} gives
\[
  |\Ker(N_{i,j})|
  =p^{\,\min\{\,r,\ j\,\}}
  =p^{\,\min\{\,m-i,\ j\,\}}.
\]

Assume that $p=2$.
From \eqref{eq:well-definedness} we have $k^{2^{n}}\equiv 1\pmod 2$, so $k$ is odd.
Hence $u_{i,j}\equiv 1\pmod 2$ for all $i,j$.
If $u_{i,j}\equiv 1\pmod{2^{r}}$, then $T_{i,j}=\id_{M_i}$, so
$N_{i,j}=2^{j}\,\id_{M_i}$ and \eqref{KerNijeqgene} yields
$|\Ker(N_{i,j})|=2^{\min\{r,j\}}$.
If $u_{i,j}\not\equiv 1\pmod{2^{r}}$, then $u_{i,j}\neq 1$ and $j\ge 1$.
Applying Lemma~\ref{lem:LTElemma}, \eqref{eq:LTE-peven-neven}
to $x=u_{i,j}$, $y=1$, and $n=2^{j}$ gives
\[
  v_2(u_{i,j}^{2^{j}}-1)
  =v_2(u_{i,j}-1)+v_2(u_{i,j}+1)+v_2(2^{j})-1
  =v_2(u_{i,j}-1)+v_2(u_{i,j}+1)+j-1.
\]
Since $u_{i,j}^{2^{j}}-1=(u_{i,j}-1)S_{i,j}$, we obtain
$v_2(S_{i,j})=v_2(u_{i,j}+1)+j-1$.
Therefore \eqref{KerNijeqgene} gives the second formula in the statement.
\end{proof}

\begin{remark}\label{remark:podd-anotherproof}
By Corollary~\ref{cor:aptG-KerNij-eq} and
Proposition~\ref{prop:KerNij-podd-2-case}, when $p$ is an odd
prime, we obtain
\[
a_{p^{t}}(G)
=
\sum_{i=\max\{0,\,t-n\}}^{\min\{t,\,m\}}
  p^{\,\min\{\,m-i,\ t-i\,\}}.\]
In particular, $a_{p^{t}}(G)$ does not depend on $k$.
Therefore, for any integers $k_{1},k_{2}$ satisfying \eqref{eq:well-definedness},
we have $\zeta_{G(p,m,n,k_{1})}(s)=\zeta_{G(p,m,n,k_{2})}(s)$.
This agrees with Corollary~\ref{cor:pge3-all-similar}.
\end{remark}

We keep the notation $G:=G(2,m,n,k)$.
For each integer $t$, we have
\begin{equation}\label{eq:a2t-by-Eij}
a_{2^{t}}(G)
=
\sum_{i=\max\{0,\,t-n\}}^{\min\{t,\,m\}}
  2^{\,E_{i,\,t-i}(k)}.
\end{equation}

Using \eqref{eq:Tijformula}, we keep in mind that $u_{i,j}\equiv k^{\,2^{\,n-j}}\pmod{2^{\,m-i}}$. Define $E_{i,j}(k)$ by
\begin{subequations}\label{eq:Eij-cases}
\begin{empheq}[left={E_{i,j}(k)=\empheqlbrace}]{align}
  &\min\{\,m-i,\,j\,\}
  &&\textup{if } u_{i,j}\equiv 1\pmod{2^{\,m-i}},
  \label{eq:Eij-case1}\\
  &\min\{\,m-i,\,v_{2}(u_{i,j}+1)+j-1\,\}
  &&\textup{if } u_{i,j}\not\equiv 1\pmod{2^{\,m-i}}.
  \label{eq:Eij-case2}
\end{empheq}
\end{subequations}

\begin{lem}\label{lem:v2kplus-implies-v2kminus}
Let $k$ be an odd integer.
If $v_{2}(k+1)\ge 2$, then $v_{2}(k-1)=1$.
In particular, for any integer $m\ge 1$ we have $\min\{m,\,v_{2}(k-1)\}=1$.
\end{lem}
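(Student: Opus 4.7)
The plan is to use a direct mod-$4$ argument on the consecutive even integers $k-1$ and $k+1$.

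First I would observe that since $k$ is odd, both $k-1$ and $k+1$ are even, and their difference is $(k+1)-(k-1)=2$. Thus at most one of them can be divisible by $4$; equivalently, exactly one of $v_{2}(k-1),v_{2}(k+1)$ equals $1$ while the other is $\ge 2$. Next, from the assumption $v_{2}(k+1)\ge 2$, I would deduce $k\equiv -1\pmod 4$, hence $k-1\equiv -2\equiv 2\pmod 4$. This immediately gives $v_{2}(k-1)=1$.

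For the final clause, I would simply substitute $v_{2}(k-1)=1$ into $\min\{m,v_{2}(k-1)\}$ and use $m\ge 1$ to conclude $\min\{m,1\}=1$.

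There is no real obstacle here; the lemma is a two-line divisibility observation. The only thing to be careful about is to state the dichotomy for consecutive even integers correctly and to remember that the hypothesis $v_{2}(k+1)\ge 2$ forces the other factor $k-1$ to land in the residue class $2\pmod 4$ rather than $0\pmod 4$.
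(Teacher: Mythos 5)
Your argument is correct and is essentially the paper's own proof: both deduce $k\equiv -1\pmod 4$ from $v_{2}(k+1)\ge 2$, conclude $k-1\equiv 2\pmod 4$, hence $v_{2}(k-1)=1$, and then note $\min\{m,1\}=1$ for $m\ge 1$. The preliminary remark about consecutive even integers is harmless but not needed.
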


\begin{proof}
Assume that $v_{2}(k+1)\ge 2$.
Then $4\mid (k+1)$, so $k\equiv -1 \pmod 4$.
Hence $k-1\equiv  2 \pmod 4$.
Therefore $2\mid (k-1)$ and $4\nmid (k-1)$, which means $v_{2}(k-1)=1$.
The last assertion follows immediately.
\end{proof}

\begin{thm}\label{thm:metap=2主結果}
Let $m,n\ge 1$ and let $k_{1},k_{2}$ be integers satisfying \eqref{eq:well-definedness}.
\begin{enumerate}
\item[\textup{(1)}] Assume $n<m$.
Then $\zeta_{G(2,m,n,k_{1})}(s)=\zeta_{G(2,m,n,k_{2})}(s)$
if and only if one of the following conditions holds:
\[
\begin{cases}
\text{\textup{(A)}}\ \ v_2(k_1+1)=v_2(k_2+1)\le m-n,\\
\text{\textup{(B)}}\ \ \min\{v_2(k_1+1), v_2(k_2+1)\}> m-n.
\end{cases}
\]

\item[\textup{(2)}] Assume $n\ge m$.
Then, $\zeta_{G(2,m,n,k_{1})}(s)=\zeta_{G(2,m,n,k_{2})}(s)$.
\end{enumerate}
\end{thm}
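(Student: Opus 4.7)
The plan is to work directly with \eqref{eq:a2t-by-Eij}, which writes $a_{2^{t}}(G(2,m,n,k))=\sum_{i}2^{E_{i,t-i}(k)}$, and to isolate which terms of this sum depend on $k$. The first step is to show that $E_{i,j}(k)$ is independent of $k$ whenever $j<n$: in that case $u_{i,j}\equiv k^{2^{n-j}}\pmod{2^{m-i}}$ is the reduction of a square of an odd integer, hence $u_{i,j}\equiv 1\pmod 8$ as an integer, so $v_{2}(u_{i,j}+1)=1$, and the two branches of \eqref{eq:Eij-cases} both collapse to $E_{i,j}(k)=\min\{m-i,j\}$. Consequently all $k$-dependence in $a_{2^{t}}(G)$ comes from the term with $j=n$, which is present precisely for $t\in[n,n+m]$.

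Next I would establish the uniform formula
\[
E_{i,n}(k)=\min\bigl\{m-i,\ v_{2}(k+1)+n-1\bigr\}
\]
for every valid $k$. Since $u_{i,n}=k$, Lemma~\ref{lem:v2kplus-implies-v2kminus} says that whenever $v_{2}(k+1)\ge 2$ one has $v_{2}(k-1)=1$. Hence in Case~A of \eqref{eq:Eij-cases} with $m-i\ge 2$ the hypothesis $v_{2}(k-1)\ge m-i$ forces $k\equiv 1\pmod 4$, so $v_{2}(k+1)=1$, and the Case~A value $\min\{m-i,n\}$ coincides with $\min\{m-i,\,v_{2}(k+1)+n-1\}$; the small cases $m-i\in\{0,1\}$ are immediate, and Case~B is the formula by definition. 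Writing $\beta_{j}:=v_{2}(k_{j}+1)$ and $r:=m-i$, the problem reduces to asking for which pairs $(\beta_{1},\beta_{2})$ the clamp functions $r\mapsto\min\{r,\,\beta_{j}+n-1\}$ agree on $r\in[0,m]$.

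Two such clamp functions agree on $[0,m]$ iff the thresholds $\beta_{j}+n-1$ coincide, or both thresholds are $\ge m$. Part (2) ($n\ge m$) is then immediate: since $\beta_{j}\ge 1$ we have $\beta_{j}+n-1\ge n\ge m$ for either $k_{j}$, so both clamp functions equal the identity on $[0,m]$ and every $a_{2^{t}}(G)$ is $k$-independent. For part (1) ($n<m$), the criterion ``thresholds equal, or both $\ge m$'' rewrites as ``$\beta_{1}=\beta_{2}$, or $\min\{\beta_{1},\beta_{2}\}>m-n$'', which is the disjunction of (A) and (B) (the case $\beta_{1}=\beta_{2}>m-n$ falls under (B)). For the converse, suppose neither (A) nor (B) holds; then $\beta_{1}\ne\beta_{2}$ and $\min\{\beta_{1},\beta_{2}\}\le m-n$. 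Taking $\beta_{1}<\beta_{2}$ with $\beta_{1}\le m-n$, evaluation at $r=m$ (that is, $i=0$, $t=n$) gives $E_{0,n}(k_{1})=\beta_{1}+n-1<\min\{m,\beta_{2}+n-1\}=E_{0,n}(k_{2})$, and since every other $(i,j)$-contribution to $a_{2^{n}}$ is $k$-independent, the zeta functions differ.

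The main technical obstacle is the derivation of the uniform formula for $E_{i,n}(k)$: the piecewise definition of $E_{i,j}$ interacts with the dichotomy $k\equiv 1\pmod 4$ versus $k\equiv 3\pmod 4$ in a way that requires careful bookkeeping, particularly in the low-$r$ regime where Case~A can hold even for $k\equiv 3\pmod 4$. Once this formula is in hand, both parts of the theorem follow from an elementary monotone comparison of two clamp functions on $[0,m]$, and no deeper structural input about the metacyclic $2$-groups is needed.
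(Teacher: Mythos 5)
Your proposal is correct and follows essentially the same route as the paper: both isolate all $k$-dependence of $a_{2^{t}}$ in the single $j=n$ term of \eqref{eq:a2t-by-Eij}, show the $j<n$ terms equal $\min\{m-i,\,j\}$ independently of $k$, and then compare the exponents $E_{i,n}(k_1)$ and $E_{i,n}(k_2)$ coefficient by coefficient. The only real difference is organizational: you merge the paper's two branches \eqref{eq:Eij-case1-ex}--\eqref{eq:Eij-case2-ex} into the single uniform formula $E_{i,n}(k)=\min\{m-i,\,v_{2}(k+1)+n-1\}$ (checking via Lemma~\ref{lem:v2kplus-implies-v2kminus} that the branches agree), which replaces the paper's case analysis 2-1-A1/A2/B and 2-2 by a short comparison of two clamp functions on $[0,m]$.
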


\begin{proof}
Assume $p=2$.
For each integer $k$ satisfying \eqref{eq:well-definedness}, set $G(k):=G(2,m,n,k)$.
For each integer $t\ge 0$, write $a_{2^{t}}(k):=a_{2^{t}}\bigl(G(k)\bigr)$.

Fix an integer $t$.
Let $i$ satisfy $\max\{0,\,t-n\}\le i\le \min\{t,\,m\}$, and set $j:=t-i$ and $r:=m-i$.
By \eqref{eq:a2t-by-Eij}, we have
\[
  a_{2^{t}}(k)
  =
  \sum_{i=\max\{0,\,t-n\}}^{\min\{t,\,m\}}
    2^{\,E_{i,\,t-i}(k)}.
\]
Here $E_{i,j}(k)$ is defined by \eqref{eq:Eij-cases}.
Moreover, \eqref{eq:Tijformula} gives the congruence $
  u_{i,j}\equiv k^{\,2^{\,n-j}}\pmod{2^{\,m-i}}$.
We always take $u_{i,j}$ to be the unique integer with
$0\le u_{i,j}<2^{m-i}$ satisfying this congruence.
In particular, for any integer $x$, if $u_{i,j}\equiv x \pmod{2^{m-i}}$, then
\begin{equation}\label{eq:v2-reduction-uij}
  v_2(u_{i,j}+1)=\min\{m-i,\,v_2(x+1)\}.
\end{equation}

\medskip
\noindent
\textbf{Case 1. $\bm{0\le j\le n-1}$.}
We show that, in this range, the values $E_{i,j}(k)$ are independent of $k$.

\medskip\noindent
\textbf{Case 1-1. $\bm{j=0}$.}
Here $B_{0}\simeq\{\id\}$.
By \eqref{eq:Z1-as-kernel} we have $\Ker(N_{i,0})
  \simeq Z^{1}(B_{0},H/A_{i})
  \simeq\{0\}$, 
hence $|\Ker(N_{i,0})|=1$ and therefore $E_{i,0}(k)=0$.
This does not depend on $k$.

\medskip\noindent
\textbf{Case 1-2. $\bm{1\le j\le n-1}$.}
Here $n-j\ge 1$.
Since $k$ satisfies \eqref{eq:well-definedness}, we have $k^{2^{n}}\equiv 1\pmod{2^{m}}$.
Hence $k$ is odd.
For any odd integer $\alpha$ and any integer $\beta\ge 1$, we have
$\alpha^{2^{\beta}}\equiv 1\pmod 8$, hence $k^{2^{\,n-j}}\equiv 1\pmod 8$.
Recalling that $u_{i,j}\equiv k^{2^{\,n-j}}\pmod{2^{r}}$, we distinguish two subcases.

\medskip\noindent
\textbf{Case 1-2-1. $\bm{2\le r}$.}
Then $u_{i,j}\equiv k^{2^{\,n-j}}\equiv 1\pmod 4$, so $v_{2}(u_{i,j}+1)=1$.
Using \eqref{eq:Eij-case2}, we obtain
\[
  E_{i,j}(k)
  =
  \min\{m-i,\,v_{2}(u_{i,j}+1)+j-1\}
  =
  \min\{m-i,\,1+(j-1)\}
  =
  \min\{m-i,\,j\}.
\]
Thus $E_{i,j}(k)=\min\{m-i,\,j\}$, and this does not depend on $k$.

\medskip\noindent
\textbf{Case 1-2-2. $\bm{r=1}$.}
Here $2^{m-i}=2$ and $u_{i,j}\in\{0,1\}$.
Since $k^{2^{\,n-j}}$ is odd, we have $k^{2^{\,n-j}}\equiv 1\pmod 2$.
Therefore $u_{i,j}\equiv 1\pmod 2$, hence $u_{i,j}=1$.
In particular, $u_{i,j}\equiv 1\pmod{2^{m-i}}$, so only \eqref{eq:Eij-case1} can occur.
Hence $E_{i,j}(k)=\min\{1,\,j\}=1$, which again is independent of $k$.

Combining Case 1-1 and Case 1-2, we conclude that, in Case 1, $E_{i,j}(k)=\min\{m-i,\,j\}$, 
and this is independent of $k$.

\medskip\noindent
\medskip\noindent
\textbf{Case 2. $\bm{j=n}$.}
Substituting $n-j=0$ into \eqref{eq:Tijformula}, we obtain
$u_{i,n}\equiv k\pmod{2^{\,m-i}}$.
Therefore
\[
  u_{i,n}\equiv 1\pmod{2^{\,m-i}}
  \iff
  k\equiv 1\pmod{2^{\,m-i}}
  \iff
  v_{2}(k-1)\ge m-i.
\]
Moreover, since $u_{i,n}\equiv k \pmod{2^{\,m-i}}$, we have $
  v_{2}(u_{i,n}+1)=\min\{m-i,\,v_{2}(k+1)\}$
 by \eqref{eq:v2-reduction-uij}. Using these, \eqref{eq:Eij-case1} and \eqref{eq:Eij-case2} can be rewritten as
\begin{subequations}\label{eq:Eij-cases2}
  \begin{empheq}[left={E_{i}(k):=E_{i,n}(k)=\empheqlbrace}]{align}
    &\min\{m-i,\,n\}
      &&\text{if } v_{2}(k-1)\ge m-i,
      \label{eq:Eij-case1-ex}\\
    &\min\{m-i,\,v_{2}(k+1)+n-1\}
      &&\text{if } v_{2}(k-1)<m-i.
      \label{eq:Eij-case2-ex}
  \end{empheq}
\end{subequations}

Since we have been using $i+j=t$, in the present case we have $t=n+i$ with $0\le i\le m-1$.
For each $0\le i\le m-1$ we can write
\[
  a_{2^{\,n+i}}(k)=C_{n+i}+2^{E_{i}(k)},
\]
where we define
\[
  C_{n+i}
  :=
  \sum_{j=\max\{0,\,n+i-m\}}^{n-1} 2^{E_{n+i-j,\,j}(k)}
  =
  \sum_{j=\max\{0,\,n+i-m\}}^{n-1} 2^{\min\{\,m-(n+i-j),\,j\,\}}.
\]
By Case~1, each term $E_{n+i-j,\,j}(k)$ with $0\le j\le n-1$ is independent of $k$.
Hence $C_{n+i}$ does not depend on $k$.
Since Case~1 also shows that $a_{2^{t}}(k)$ is independent of $k$ for $0\le t\le n-1$,
the only possible dependence of $\zeta_{G(k)}(s)$ on $k$ comes from the terms $a_{2^{n+i}}(k)$ with $0\le i\le m-1$.
Therefore we obtain the equivalence
\begin{equation}\label{eq:ゼータ関数一致のiff}
  \zeta_{G(k_{1})}(s)=\zeta_{G(k_{2})}(s)
  \iff
  E_{i}(k_{1})=E_{i}(k_{2})
  \quad(0\le i\le m-1).
\end{equation}

For each integer $k$ satisfying \eqref{eq:well-definedness}, set
\[
  s_{2}(k):=v_{2}(k+1),\qquad
  c'(k):=\min\{m,\,v_{2}(k-1)\},\qquad
  \sigma(k):=s_{2}(k)+n-1.
\]
In terms of this notation, \eqref{eq:Eij-case1-ex} and \eqref{eq:Eij-case2-ex} become
\begin{subequations}\label{eq:Eij-cases3}
  \begin{empheq}[left={E_{i}(k)=\empheqlbrace}]{align}
    &\min\{m-i,\,n\}
      &&\text{if } i\ge m-c'(k),
      \label{eq:Eij-case1-ex2}\\
    &\min\{m-i,\,\sigma(k)\}
      &&\text{if } i<m-c'(k).
      \label{eq:Eij-case2-ex2}
  \end{empheq}
\end{subequations}

\medskip\noindent
\textbf{Case 2-1. $\bm{n<m}$.}
We prove statement \textup{(1)} of the theorem.

\medskip\noindent
\textbf{Case 2-1-A. $\bm{s_{2}(k)\le m-n}$.}
We distinguish two subcases.

\medskip\noindent
\textbf{Case 2-1-A1. $\bm{s_{2}(k)=1}$.}
Then $\sigma(k)=n$.
Hence \eqref{eq:Eij-case1-ex2} and \eqref{eq:Eij-case2-ex2} give
$E_{i}(k)=\min\{m-i,\,n\}$ for every $0\le i\le m-1$.
Thus, in this subcase, the sequence $(E_i(k))$ does not depend on $k$.
In particular, if $s_{2}(k_{1})=s_{2}(k_{2})=1$, then $E_{i}(k_{1})=E_{i}(k_{2})$ for all $i$.
By \eqref{eq:ゼータ関数一致のiff}, we obtain
$\zeta_{G(k_{1})}(s)=\zeta_{G(k_{2})}(s)$.

\medskip\noindent
\textbf{Case 2-1-A2. $\bm{2\le s_{2}(k)\le m-n}$.}
Since $k$ is odd and $s_{2}(k)\ge 2$, Lemma~\ref{lem:v2kplus-implies-v2kminus} gives $v_{2}(k-1)=1$.
Hence $c'(k)=1$.
Therefore $m-c'(k)=m-1$.
For $0\le i\le m-2$, we have $i<m-c'(k)$, so \eqref{eq:Eij-case2-ex2} gives
\[
  E_{i}(k)=\min\{m-i,\,\sigma(k)\}.
\]
Moreover, $2\le s_{2}(k)\le m-n$ implies
\[
  n+1\le \sigma(k)=s_{2}(k)+n-1\le m-1.
\]
In particular, $\sigma(k)\le m-1$, so $E_{0}(k)=\sigma(k)$.

We first prove sufficiency in this subcase.
If $k_{1}$ and $k_{2}$ satisfy $s_{2}(k_{1})=s_{2}(k_{2})$ and $2\le s_{2}(k_{1})\le m-n$,
then $\sigma(k_{1})=\sigma(k_{2})$ and $c'(k_{1})=c'(k_{2})=1$.
Hence \eqref{eq:Eij-cases3} gives $E_{i}(k_{1})=E_{i}(k_{2})$ for every $i$.
By \eqref{eq:ゼータ関数一致のiff}, we obtain
$\zeta_{G(k_{1})}(s)=\zeta_{G(k_{2})}(s)$.

Next we prove necessity in Case 2-1-A.
Assume that $E_{i}(k_{1})=E_{i}(k_{2})$ holds for all $0\le i\le m-1$ and that
$s_{2}(k_{1})\le m-n$ and $s_{2}(k_{2})\le m-n$.
If $s_{2}(k_{\nu})=1$ for some $\nu\in\{1,2\}$, then $E_{0}(k_{\nu})=n$ by Case 2-1-A1.
If $s_{2}(k_{\mu})\ge 2$ for some $\mu\in\{1,2\}$, then Case 2-1-A2 gives
$E_{0}(k_{\mu})=\sigma(k_{\mu})\ge n+1$.
Therefore $E_{0}(k_{1})=E_{0}(k_{2})$ implies that either
$s_{2}(k_{1})=s_{2}(k_{2})=1$ or $2\le s_{2}(k_{1}),s_{2}(k_{2})\le m-n$.

Assume $2\le s_{2}(k_{1}),s_{2}(k_{2})\le m-n$.
Then Case 2-1-A2 gives $E_{0}(k_{\nu})=\sigma(k_{\nu})$ for $\nu\in\{1,2\}$.
Since $E_{0}(k_{1})=E_{0}(k_{2})$, it follows that $\sigma(k_{1})=\sigma(k_{2})$.
Hence $s_{2}(k_{1})=s_{2}(k_{2})$.
Combining this with Case 2-1-A1, we conclude that, in Case 2-1-A,
\[
  E_{i}(k_{1})=E_{i}(k_{2})\ (0\le i\le m-1)
  \iff
  s_{2}(k_{1})=s_{2}(k_{2})\le m-n.
\]
By \eqref{eq:ゼータ関数一致のiff}, this is equivalent to
\[
  \zeta_{G(k_{1})}(s)=\zeta_{G(k_{2})}(s)
  \iff
  v_{2}(k_{1}+1)=v_{2}(k_{2}+1)\le m-n.
\]

\medskip\noindent
\textbf{Case 2-1-B. $\bm{s_{2}(k)\ge m-n+1}$.}
Assume $s_{2}(k)\ge m-n+1$.
Since $n<m$, we have $m-n+1\ge 2$.
Therefore Lemma~\ref{lem:v2kplus-implies-v2kminus} gives $c'(k)=1$.
Moreover, $\sigma(k)=s_{2}(k)+n-1\ge m$.
Hence, for every $0\le i\le m-1$, \eqref{eq:Eij-case2-ex2} yields
\[
  E_{i}(k)=\min\{m-i,\,\sigma(k)\}=m-i.
\]
Thus the sequence $(E_{i}(k))_{0\le i\le m-1}$ is independent of $k$ in this subcase.
In particular, if $s_{2}(k_{1})\ge m-n+1$ and $s_{2}(k_{2})\ge m-n+1$, then
$E_{i}(k_{1})=E_{i}(k_{2})$ for all $i$, so
$\zeta_{G(k_{1})}(s)=\zeta_{G(k_{2})}(s)$ by \eqref{eq:ゼータ関数一致のiff}.

Conversely, assume that $E_{i}(k_{1})=E_{i}(k_{2})$ holds for all $0\le i\le m-1$
and that $s_{2}(k_{1})\ge m-n+1$.
Then $E_{0}(k_{1})=m$ in this subcase.
Hence $E_{0}(k_{2})=m$.
Since $E_{0}(k_{2})=\min\{m,\,\sigma(k_{2})\}$ by \eqref{eq:Eij-cases3},
we obtain $\sigma(k_{2})\ge m$, so $s_{2}(k_{2})\ge m-n+1$.
Therefore, in Case 2-1-B,
\[
  \zeta_{G(k_{1})}(s)=\zeta_{G(k_{2})}(s)
  \iff
  v_{2}(k_{1}+1)\ge m-n+1 \text{ and } v_{2}(k_{2}+1)\ge m-n+1.
\]

Combining Case 2-1-A and Case 2-1-B, we obtain statement \textup{(1)} of the theorem.

\medskip\noindent
\textbf{Case 2-2. $\bm{n\ge m}$.}
For any $0\le i\le m-1$, we have $\min\{m-i,\,n\}=m-i$.
Since $k$ is odd, we have $s_{2}(k)\ge 1$, so $\sigma(k)=s_{2}(k)+n-1\ge m$.
Therefore \eqref{eq:Eij-cases3} gives $E_{i}(k)=m-i$ for every $0\le i\le m-1$.
Hence $E_{i}(k_{1})=E_{i}(k_{2})$ holds for all $i$ and for all $k_{1},k_{2}$ satisfying \eqref{eq:well-definedness}.
By \eqref{eq:ゼータ関数一致のiff}, this implies
$\zeta_{G(k_{1})}(s)=\zeta_{G(k_{2})}(s)$.
This proves statement \textup{(2)} of the theorem.
\end{proof}

\begin{exam}\label{exam:pmn253case}
We consider the case $(p,m,n)=(2,5,3)$ and apply
Theorem~\ref{thm:metap=2主結果}.
By \eqref{eq:well-definedness}, the group $G(2,5,3,k)$ is valid if and only if
$k^{8}\equiv 1 \pmod{32}$.
Any such $k$ must be odd.
Conversely, since the multiplicative group
$(\mathbb{Z}/32\mathbb{Z})^{\times}\simeq \mathbb{Z}_{2}\times \mathbb{Z}_{8}$
has exponent $8$, every odd integer $k$ satisfies $k^{8}\equiv 1 \pmod{32}$.
Hence, the admissible parameters $k$ are precisely the odd residues modulo $32$, namely
\begin{equation}\label{eq:exampmn253-welldefind}
  k \in \{1,3,5,7,9,11,13,15,17,19,21,23,25,27,29,31\}.
\end{equation}

Moreover, by \eqref{eq:同型の条件メタサイクル}, we have
\[
  G(2,5,3,k_{1})\simeq G(2,5,3,k_{2})
  \iff
  \exists\, v\in(\mathbb{Z}/8\mathbb{Z})^{\times}
  \text{ such that } k_{2}\equiv k_{1}^{v}\pmod{32}.
\]
Among the values of $k$ in \eqref{eq:exampmn253-welldefind}, the isomorphism
classes are given by the following partition:
\begin{equation}\label{eq:exampmn253-iso}
  \{1\},\ \{3,11,19,27\},\ \{5,13,21,29\},\ \{7,23\},\ \{9,25\},\ \{15\},\ \{17\},\ \{31\}.
\end{equation}
Thus we may take a complete set of representatives $k\in\{1,3,5,7,9,15,17,31\}$.
For these values of $k$, we have
\[
v_{2}(k+1)=1 \textup{ for } k\in\{1,5,9,17\},\qquad
v_{2}(k+1)=2 \textup{ for } k=3,\qquad
v_{2}(k+1)\ge 3 \textup{ for } k\in\{7,15,31\}.
\]
Theorem~\ref{thm:metap=2主結果} shows that the group zeta function
of $G(2,5,3,k)$ is determined by $v_{2}(k+1)$ and the threshold $m-n+1=3$.
Therefore these representatives fall into the following three zeta-classes:
\begin{equation}\label{eq:非同型だがゼータ一致クラス}
 \{1,5,9,17\},\qquad \{3\},\qquad \{7,15,31\}. 
\end{equation}
Let $k_{1}$ and $k_{2}$ be distinct elements within the same class.
Then $G(2,5,3,k_{1})$ and $G(2,5,3,k_{2})$ are non-isomorphic.
Theorem~\ref{thm:metap=2主結果} implies that they have the same group zeta function.

\end{exam}

\begin{fac}[\cite{Hironaka2017}]\label{fac:coprime-product-multiplicativity}
Let $G$ and $G'$ be finite $p$-groups with $\zeta_{G}(s)=\zeta_{G'}(s)$.
Let $P'$ be a finite group such that $p\nmid |P'|$.
Then $|G|$ and $|P'|$ are coprime, and hence $\zeta_{G\times P'}(s)=\zeta_{G}(s)\,\zeta_{P'}(s)$.
In particular, we have $\zeta_{G\times P'}(s)=\zeta_{G'\times P'}(s)$. 
Moreover, if $G\not\simeq G'$, then $G\times P'\not\simeq G'\times P'$.
\end{fac}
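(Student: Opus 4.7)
The plan is to reduce the entire statement to a structural decomposition of the subgroups of a coprime direct product, from which the Dirichlet-series identity and the rigidity of isomorphism follow formally. Since $G$ has order a power of $p$ while $p\nmid |P'|$, the coprimality $\gcd(|G|,|P'|)=1$ is immediate.

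The key step is to show that every subgroup $H\le G\times P'$ decomposes as $H=\pi_G(H)\times \pi_{P'}(H)$, where $\pi_G$ and $\pi_{P'}$ are the coordinate projections. Given $(g,x)\in H$, the element orders satisfy $|g|\mid |G|$ (a power of $p$) and $|x|\mid |P'|$ (coprime to $p$), so $\gcd(|g|,|x|)=1$. Bezout supplies integers $u,v$ with $u|g|+v|x|=1$; raising $(g,x)$ to the powers $v|x|$ and $u|g|$ yields $(g,e)$ and $(e,x)$, both inside $H$. Ranging over all $(g,x)\in H$ forces $\pi_G(H)\times\{e\}\subseteq H$ and $\{e\}\times\pi_{P'}(H)\subseteq H$, hence equality $H=\pi_G(H)\times \pi_{P'}(H)$. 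This produces a bijection $S(G\times P')\longleftrightarrow S(G)\times S(P')$ with $|A\times B|=|A|\cdot|B|$, giving
\[
\zeta_{G\times P'}(s)=\sum_{A\le G}\sum_{B\le P'}(|A|\,|B|)^{-s}=\zeta_G(s)\,\zeta_{P'}(s),
\]
after which the identity $\zeta_{G\times P'}(s)=\zeta_{G'\times P'}(s)$ is an immediate consequence of the hypothesis $\zeta_G(s)=\zeta_{G'}(s)$.

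For the non-isomorphism assertion, I would observe that $G\times\{e\}$ is the unique Sylow $p$-subgroup of $G\times P'$: it is normal of $p$-power order $|G|$, and since $p\nmid |P'|$ every $p$-subgroup of the product must be contained in it. Consequently, any isomorphism $G\times P'\simeq G'\times P'$ restricts to an isomorphism between the unique Sylow $p$-subgroups, forcing $G\simeq G'$; taking the contrapositive delivers the claim. No serious obstacle arises; the only non-routine point is the Bezout-based splitting of subgroups in the coprime product, and this is a standard manipulation.
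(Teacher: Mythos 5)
Your proof is correct. The paper states this result as a Fact quoted from \cite{Hironaka2017} and gives no proof of its own, so there is nothing to compare against; your argument is the standard one behind the citation: the Bezout/coprime-order splitting shows every subgroup of $G\times P'$ is $\pi_G(H)\times\pi_{P'}(H)$, which gives the multiplicativity $\zeta_{G\times P'}(s)=\zeta_G(s)\,\zeta_{P'}(s)$, and the uniqueness of the normal Sylow $p$-subgroup $G\times\{e\}$ correctly yields the non-isomorphism claim by contrapositive.
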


\begin{exam}\label{exam:pmn253-product-extension}
We continue with the case $(p,m,n)=(2,5,3)$ in Example~\ref{exam:pmn253case}.
By \eqref{eq:exampmn253-iso}, the four groups $G(2,5,3,k)$ with
$k\in\{1,5,9,17\}$ are pairwise non-isomorphic, but have the same group zeta function.

Let $P'$ be a finite group such that $2\nmid |P'|$.
Then, by Fact~\ref{fac:coprime-product-multiplicativity}, the four direct products
\[
  G(2,5,3,k)\times P'\qquad k\in\{1,5,9,17\}
\]
are pairwise non-isomorphic and have the same group zeta function.
\end{exam}

Let $L(G)$ denote the subgroup lattice of a finite group $G$.
We write $L(G_{1})\simeq L(G_{2})$ to mean that the lattices $L(G_{1})$ and $L(G_{2})$
are isomorphic as lattices.
For subgroups $H\subset K$ of $G$, we write $H\prec K$ to indicate that $K$ covers $H$ in $L(G)$.
Then the following holds.

\begin{lem}\label{lem:p-group-lattice-iso-implies-zeta}
Let $p$ be a prime and let $G_{1}$ and $G_{2}$ be finite $p$-groups.
Assume that $|G_{1}|=|G_{2}|$.
Then the following implication holds.
\[
  L(G_{1})\simeq L(G_{2}) \Longrightarrow \zeta_{G_{1}}(s) = \zeta_{G_{2}}(s).
\]
\end{lem}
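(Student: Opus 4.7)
The plan is to show that any lattice isomorphism $\varphi\colon L(G_1)\to L(G_2)$ automatically preserves subgroup orders; this immediately yields $a_{p^t}(G_1)=a_{p^t}(G_2)$ for every $t\ge 0$ and hence $\zeta_{G_1}(s)=\zeta_{G_2}(s)$.

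The key ingredient is the standard fact that, in a finite $p$-group, if $H\le K$ with $[K:H]=p^a$, then there exists a chain $H=H_0<H_1<\cdots<H_a=K$ with $[H_{i+1}:H_i]=p$ for each $i$; equivalently, a covering relation $H\prec K$ in $L(G)$ is the same as the condition $[K:H]=p$. Consequently, every maximal chain from $\{\id\}$ to a subgroup $H$ of $G$ has length exactly $\log_p|H|$, so the order of $H$ is determined purely by the lattice structure.

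Next I would use that $\varphi$, being a lattice isomorphism, is in particular an order isomorphism: it sends minimum to minimum, preserves covering relations, and carries maximal chains in $L(G_1)$ to maximal chains in $L(G_2)$ of the same length. Combined with the previous paragraph, this forces $|H|=|\varphi(H)|$ for every $H\in L(G_1)$. Since $|G_1|=|G_2|=p^{\ell}$, every subgroup has order of the form $p^t$, so $\varphi$ restricts, for each $t$, to a bijection between the subgroups of order $p^t$ in $G_1$ and those in $G_2$, giving $a_{p^t}(G_1)=a_{p^t}(G_2)$ for every $t$ and hence the desired zeta equality. I do not anticipate a serious obstacle here: the only nontrivial input is the standard $p$-group chain fact, and everything else is a routine translation between lattice language and subgroup orders.
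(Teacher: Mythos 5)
Your proposal is correct and follows essentially the same route as the paper's proof: identifying covering relations in $L(G)$ with index-$p$ inclusions, deducing that maximal chain lengths determine subgroup orders, and concluding that a lattice isomorphism preserves orders and hence all $a_{p^t}$. No gaps to report.
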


\begin{proof}
Let $G$ be a finite $p$-group, and let $H\subset K\subset G$.
Then $H\prec K$ in $L(G)$ is equivalent to saying that $H$ is a maximal subgroup of $K$.
Since every maximal subgroup of a finite $p$-group has index $p$, it follows that
$H\prec K$ if and only if $[K:H]=p$.

Let $G_{1}$ and $G_{2}$ be finite $p$-groups with $|G_{1}|=|G_{2}|$,
and let $\Phi\colon L(G_{1})\to L(G_{2})$ be a lattice isomorphism.
Let $A\subset G_{1}$, and take a maximal chain $\{\id\}=A_{0}\prec A_{1}\prec \dotsb \prec A_{r}=A$
in the interval $[\{\id\},A]$.
Then $[A_{i}:A_{i-1}]=p$ for each $i$, hence $|A|=p^{r}$ and $\log_{p}|A|=r$.
Since $\Phi$ preserves cover relations, the chain
$\{\id\}=\Phi(A_{0})\prec \Phi(A_{1})\prec \dotsb \prec \Phi(A_{r})=\Phi(A)$
is a maximal chain of the same length.
Therefore $|\Phi(A)|=|A|$.

It follows that $\Phi$ induces, for each $r\ge 0$, a bijection between the subgroups of $G_{1}$
of order $p^{r}$ and the subgroups of $G_{2}$ of order $p^{r}$.
Hence $a_{p^{r}}(G_{1})=a_{p^{r}}(G_{2})$ for every $r$, and therefore
$\zeta_{G_{1}}(s)=\zeta_{G_{2}}(s)$.
\end{proof}

\begin{remark}
For the groups $G(2,5,3,k_{i})$ in Example~\ref{exam:pmn253case}, we verified in GAP
that, among the non-isomorphic groups $G(2,5,3,k_{i})$, the subgroup lattices
fall into the following lattice-isomorphism classes:
\begin{equation}\label{eq:非同型だが束同型一致クラス}
 \{1,5,9,17\},\ \{3\},\ \{7\},\ \{15\},\ \{31\}.
\end{equation}
\end{remark}

\begin{cor}
In general, the converse of Lemma~\ref{lem:p-group-lattice-iso-implies-zeta} does not hold.
\end{cor}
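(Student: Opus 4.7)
The plan is simply to exhibit one explicit counterexample, and the running example with $(p,m,n)=(2,5,3)$ in Example~\ref{exam:pmn253case} supplies everything I need. I would pick two parameters $k_1,k_2$ that lie in the same zeta-class of \eqref{eq:非同型だがゼータ一致クラス} but in different lattice-isomorphism classes of \eqref{eq:非同型だが束同型一致クラス}. The zeta-class $\{7,15,31\}$ splits into three singleton lattice-classes, so any two distinct elements of it will do; for concreteness I would take $k_1=7$ and $k_2=15$.

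I would then verify the two required properties separately. First, equality of zeta functions: both values satisfy $v_2(k_i+1)\ge 3>m-n=2$, so condition (B) of Theorem~\ref{thm:metap=2主結果}\,(1) applies and $\zeta_{G(2,5,3,7)}(s)=\zeta_{G(2,5,3,15)}(s)$. Second, non-isomorphism of the subgroup lattices: since $|G(2,5,3,7)|=|G(2,5,3,15)|=2^{8}$, Lemma~\ref{lem:p-group-lattice-iso-implies-zeta} is applicable, so any lattice isomorphism would be consistent with the observed zeta equality; the task is to rule one out. The remark preceding the statement records that a direct check in GAP places $7$ and $15$ in different lattice-isomorphism classes, and this is precisely what is required to conclude $L(G(2,5,3,7))\not\simeq L(G(2,5,3,15))$.

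The main, and essentially only, obstacle is the verification of lattice non-isomorphism without appealing to a computer algebra system. A hand argument would require distinguishing $L(G(2,5,3,7))$ from $L(G(2,5,3,15))$ by some invariant finer than the order-counts $a_{2^{t}}$ — for instance, for each pair $(s,t)$, the number of covering pairs $H\prec K$ with $|H|=2^{s}$, $|K|=2^{t}$, or the isomorphism types of maximal subgroups, or the number of subgroups of each given isomorphism type. For the corollary as stated, however, the GAP verification cited in the remark is sufficient, and the proof reduces to the single line: $k_1=7$ and $k_2=15$ yield non-isomorphic finite $2$-groups with identical group zeta functions but non-isomorphic subgroup lattices, contradicting the converse of Lemma~\ref{lem:p-group-lattice-iso-implies-zeta}.
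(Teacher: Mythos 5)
Your proposal is correct and follows essentially the same route as the paper: it takes two distinct elements of the zeta-class $\{7,15,31\}$ from Example~\ref{exam:pmn253case}, uses Theorem~\ref{thm:metap=2主結果} (condition (B)) for zeta equality, and relies on the GAP-verified lattice classes \eqref{eq:非同型だが束同型一致クラス} for lattice non-isomorphism, exactly as the paper does.
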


\begin{proof}
Compare \eqref{eq:非同型だがゼータ一致クラス} with \eqref{eq:非同型だが束同型一致クラス}.
If $k_{1}$ and $k_{2}$ are distinct elements of $\{7,15,31\}$, then
$\zeta_{G(2,5,3,k_{1})}(s)=\zeta_{G(2,5,3,k_{2})}(s)$,
while $L\bigl(G(2,5,3,k_{1})\bigr)\not\simeq L\bigl(G(2,5,3,k_{2})\bigr)$.
This yields a counterexample to the converse implication.
\end{proof}

\begin{remark}
The following questions remain open.
\begin{itemize}
  \item Fix integers $m,n\ge 1$.
  For which pairs $(k_{1},k_{2})$ do the groups $G(p,m,n,k_{1})$ and $G(p,m,n,k_{2})$
  have the same group zeta function, while their subgroup lattices are non-isomorphic?
  \item More generally, let $G_{1}$ and $G_{2}$ be finite $p$-groups with
  $\zeta_{G_{1}}(s)=\zeta_{G_{2}}(s)$ and $G_{1}\not\simeq G_{2}$.
  Under what additional conditions does it follow that $L(G_{1})\simeq L(G_{2})$?
\end{itemize}
\end{remark}

\section*{Acknowledgements}
We are deeply indebted to Professor Koichi Betsumiya for his constant guidance
and supervision throughout this research. We also thank Takara Kondo of Kumamoto University for bringing to our attention the problem of group zeta functions and for helpful discussions. Their support and encouragement were indispensable to the completion of this work.

\end{document}